\newtheorem{lemma}{Lemma}[section]
\newtheorem{theorem}[lemma]{Theorem}
\newtheorem{conjecture}[lemma]{Conjecture}
\newtheorem{corollary}[lemma]{Corollary}
\theoremstyle{definition}
\newtheorem{definition}[lemma]{Definition}
\newcommand{\R}{\mathbb{R}}
\newcommand{\Rplus}{\R_+}
\newcommand{\Ham}{\mathbb{H}}
\newcommand{\Jcal}{\mathcal{J}}
\newcommand{\Scal}{\mathcal{S}}
\DeclareMathOperator{\ind}{IND}              
\DeclareMathOperator{\thb}{TH}               
\DeclareMathOperator{\clique}{QIND}          
\DeclareMathOperator{\lthb}{LTH}             
\DeclareMathOperator{\bthb}{TH^\circ}
\newcommand{\borderM}{R}            
\newcommand{\one}{\mathbf{1}}       
\newcommand{\tp}{{\sf T}}           
\DeclareMathOperator{\conv}{conv}   
\DeclareMathOperator{\diag}{diag}   
\DeclareMathOperator{\trace}{tr}    
\DeclareMathOperator{\aut}{Aut}     
\DeclareMathOperator{\iso}{Iso}     
\newcommand{\btheta}{\vartheta^\circ}
\DeclareMathOperator{\hoff}{Hoff}   
\newcommand{\lhof}{(}
\newcommand{\rhof}{)}
\newenvironment{optprob}
{
  \arraycolsep=0pt
  \begin{array}{r@{\ }l@{\quad}l}
}%
{
  \end{array}
}
\newlength\claimlen%
\newcommand{\defi}[1]{\textit{#1}}
\newenvironment{defsymb}[1]
{%
  \par\addvspace{\medskipamount}%
  \leftskip=\parindent%
  \rightskip=\parindent%
  \parindent=0pt%
  \sbox0{#1}%
  \hangindent=\wd0%
  \hangafter=1%
  \vrule height0pt depth0pt width0pt\hbox{#1}\ignorespaces%
}
{\par\addvspace{\medskipamount}}
\title{A recursive theta body for hypergraphs}
\author{Davi Castro-Silva}
\address{D. Castro-Silva, Centrum Wiskunde \& Informatica, Postbus~94079, 1090~GB
  Amsterdam, The Netherlands.}
\email{davi.silva@cwi.nl}
\author{Fernando Mário de Oliveira Filho}
\address{F.M. de Oliveira Filho, Delft Institute of Applied Mathematics,
  Delft University of Technology, Mekelweg~4, 2628~CD Delft, The
  Netherlands.}
\email{F.M.deOliveiraFilho@tudelft.nl}
\author{Lucas Slot}
\address{L. Slot, ETH Zürich, Universitätsstrasse 6, 8092 Zürich, Switzerland.}
\email{lucas.slot@inf.ethz.ch}
\author{Frank Vallentin}
\address{F. Vallentin, Department Mathematik/Informatik, Abteilung
  Mathematik, Universität zu Köln, Weyertal 86--90, 50931 Köln,
  Germany.}
\email{frank.vallentin@uni-koeln.de}
\thanks{This project has received funding from the European Union's Horizon 2020
  research and innovation programme under the Marie Sk\l{}odowska-Curie
  agreement No 764759. The third author was at the Centrum Wiskunde \&
  Informatica, Amsterdam, when this research was carried out.  The fourth author
  is partially supported by the SFB/TRR 191 ``Symplectic Structures in Geometry,
  Algebra and Dynamics'' and by the project ``Spectral bounds in extremal
  discrete geometry'' (project number 414898050), both funded by the DFG}
\subjclass[2010]{05C15, 68R10, 90C27}
\date{March 29, 2023}
\begin{document}

\begin{abstract}
  The theta body of a graph, introduced by Grötschel, Lovász, and Schrijver
  in~1986, is a tractable relaxation of the independent-set polytope
  derived from the Lovász theta number. In this paper, we recursively
  extend the theta body, and hence the theta number, to hypergraphs. We
  obtain fundamental properties of this extension and relate it to the
  high-dimensional Hoffman bound of Filmus, Golubev, and Lifshitz.  We
  discuss two applications: triangle-free graphs and Mantel's theorem, and
  bounds on the density of triangle-avoiding sets in the Hamming cube.
\end{abstract}

\maketitle
\markboth{D. Castro-Silva, F.M. de Oliveira Filho, L. Slot, and
  F. Vallentin}{A recursive theta body for hypergraphs}


\section{Introduction}

The theta number of a graph, introduced by Lovász~\cite{Lovasz1979} to
determine the Shannon capacity of the pentagon, is one of the founding
results of semidefinite programming and has inspired numerous developments
in combinatorics (see Grötschel, Lovász, and
Schrijver~\cite[Chapter~9]{GrotschelLS1988} and
Schrijver~\cite[Chapter~67]{Schrijver2003}), coding theory (see
Schrijver~\cite{Schrijver2005}), and discrete geometry (see Oliveira and
Vallentin~\cite{OliveiraV2019}).  It is a graph parameter that provides at
the same time an upper bound for the independence number of a graph and a
lower bound for the chromatic number of its complement, a result known as
Lovász's sandwich theorem.  The theta number also has weighted variants,
and both Lovász's original parameter and its variants can be computed in
polynomial time.  To this day, the only known polynomial-time algorithms to
compute a maximum-weight independent set or a minimum-weight coloring in a
perfect graph compute the weighted theta number as a subroutine.

The sandwich theorem has a geometrical counterpart, the theta body.  The
theta body of a graph~$G = (V, E)$ was introduced by Grötschel, Lovász, and
Schrijver~\cite{GrotschelLS1986}; it is the convex
body~$\thb(G) \subseteq \R^V$ given by the feasible region of the
optimization program defining the theta number.  It contains the
independent-set polytope of~$G$ and is contained in the polytope defined by
the clique inequalities of~$G$.  One can optimize linear functions over the
theta body in polynomial time, that is, the weak optimization problem
over~$\thb(G)$ can be solved in polynomial time.  The theta body provides a
characterization of perfect graphs: $\thb(G)$ is a polytope, and in this
case is exactly the independent-set polytope, if and only if~$G$ is a
perfect graph.
\medbreak

In this paper we extend the definition of the theta body from graphs to
hypergraphs, derive fundamental properties of this extension, and discuss
applications.


\subsection{Independence in hypergraphs}

Let~$H = (V, E)$ be an $r$-uniform hypergraph for some integer~$r \geq 1$,
so~$V$ is a finite set and $E \subseteq \binom{V}{r}$, where $\binom{V}{r}$
denotes the set of $r$-element subsets of $V$.  For $r=2$ this gives the
usual notion of a graph, while the case $r=1$ is somewhat degenerate but
convenient for inductive arguments.  The \defi{complement} of~$H$ is the
$r$-uniform hypergraph~$\overline{H}$ with vertex set~$V$ in which an
$r$-subset~$e$ of~$V$ is an edge if and only if~$e$ is not an edge of~$H$.

A set~$I \subseteq V$ is \defi{independent} in~$H$ if no edge of~$H$ is
contained in~$I$.  Given a weight function~$w \in \R^V$, the
\defi{weighted independence number} of~$H$ is
\[
  \alpha(H, w) = \max\{\, w(I) : \text{$I \subseteq V$ is independent}\,\},
\]
where~$w(I) = \sum_{x \in I} w(x)$.  When~$w = \one$ is the constant-one
function,~$\alpha(H, w)$ is the \defi{independence number} of~$H$, denoted
simply by~$\alpha(H)$.  Computing the independence number of a graph is a
known NP-hard problem~\cite{Karp1972} and computing its hypergraph
counterpart is also NP-hard.

The \defi{independent-set polytope} of~$H$ is the convex hull of
characteristic functions of independent sets of~$H$, namely
\[
  \ind(H) = \conv\{\, \chi_I \in \R^V : \text{$I \subseteq V$ is
    independent}\,\},
\]
where~$\chi_S \in \R^V$ is the characteristic function of~$S \subseteq V$.
The weighted independence number~$\alpha(H, w)$ can be computed by
maximizing~$w^\tp f$ over~$f \in \ind(H)$, and so optimizing over~$\ind(H)$
is an NP-hard problem.

A \defi{clique} of~$H$ is a set~$C \subseteq V$ such that every $r$-subset
of~$C$ is an edge.  Note that cliques of~$H$ are independent sets
of~$\overline{H}$ and vice versa.  Note also that any set with fewer
than~$r$ elements is both a clique and an independent set (the same happens
with graphs: single vertices are both cliques and independent sets).

If~$C$ is a clique of~$H$ and if~$f \in \ind(H)$, then~$f(C) \leq r - 1$.
These valid inequalities for~$\ind(H)$ are called \defi{clique
  inequalities}; they give a relaxation of the independent-set polytope,
namely the polytope
\begin{equation} \label{eq:QIND}
    \clique(H) = \{\, f \in [0, 1]^V : \text{$f(C) \leq r - 1$ for every
    clique~$C \subseteq V$}\, \}.
\end{equation}
Clearly,~$\ind(H) \subseteq \clique(H) \subseteq [0, 1]^V$.  The integer
vectors in $\clique(H)$ are precisely the characteristic functions of
independent sets, and so the integer hull of~$\clique(H)$ is~$\ind(H)$.

Since cliques of~$H$ are independent sets of~$\overline{H}$, the separation
problem over $\clique(H)$ consists of finding a maximum-weight independent
set of~$\overline{H}$, and it is therefore NP-hard.  As a consequence,
optimizing over $\clique(H)$ is NP-hard as well.


\subsection{The theta body of graphs and hypergraphs}

Grötschel, Lovász, and Schrijver~\cite{GrotschelLS1986} defined the theta
body of a graph $G$: a convex relaxation of~$\ind(G)$ stronger
than $\clique(G)$ over which it is possible to optimize a linear function
in polynomial time.

For a symmetric matrix~$A$, write
\[
  \borderM(A) =
  \begin{pmatrix}
    1&a^\tp\\a&A
  \end{pmatrix},
\]
where~$a = \diag A$ is the diagonal of~$A$.  The \defi{theta body} of a
graph~$G = (V, E)$ is
\begin{equation}
  \label{eq:theta-graph}
  \begin{split}
    \thb(G) = \{\, f \in \R^V :\ & \text{there is~$F \in \R^{V \times V}$
      such that $f = \diag F$},\\
    & F(x, y) = 0\quad\text{if~$\{x, y\} \in E$, and}\\
    & \text{$\borderM(F)$ is positive semidefinite}\,\}.
  \end{split}
\end{equation}
(This specific formulation was given by Lovász and
Schrijver~\cite{LovaszS1991}.)  Here and elsewhere, positive semidefinite
matrices are always symmetric.

The theta body is a closed and convex set satisfying
\[
  \ind(G) \subseteq \thb(G) \subseteq \clique(G)
\]
for every graph~$G$; since~$\clique(G)$ is bounded, the theta body is compact.
Moreover, optimizing over the theta body is the same as solving a semidefinite
program, and in this case this can be done to any desired precision in
polynomial time using either the ellipsoid
method~\cite[Chapter~9]{GrotschelLS1988} or the interior-point
method~\cite{KlerkV2016}.

The Lovász theta number of~$G$ for a weight function~$w \in \R^V$ is
obtained by optimizing over the theta body, namely
\[
  \vartheta(G, w) = \max\{\, w^\tp f : f \in \thb(G)\, \};
\]
for~$w = \one$ we recover the theta number as originally defined by
Lovász~\cite{Lovasz1979}, which we denote simply by~$\vartheta(G)$.
Immediately we get
\[
  \alpha(G, w) \leq \vartheta(G, w).
\]

Our aim is to extend the definition of the theta body, and therefore of the
theta number, to $r$-uniform hypergraphs for~$r \geq 3$.  We do so
recursively, and the base of our recursion is~$r = 1$.  By taking this as
the base, we can give uniform proofs without relying on what is known about
the theta body of a graph.  So we will always take~$r = 1$ as the base,
unless this choice would lead us into trouble.

Let~$H = (V, E)$ be an $r$-uniform hypergraph for~$r \geq 2$.
Given~$x \in V$, the \defi{link} of~$x$ in~$H$ is the $(r-1)$-uniform
hypergraph~$H_x$ with vertex set
\[
  V_x = \{\, y \in V\setminus\{x\} : \text{there is~$e \in E$
    containing~$x$ and~$y$}\,\},
\]
in which an $(r-1)$-subset~$e$ of~$V_x$ is an edge if and only
if~$e \cup \{x\}$ is an edge of~$H$.

Given a matrix~$A \in \R^{V \times V}$ and~$x \in V$, let~$A_x \in \R^V$
denote the row of~$A$ indexed by~$x$, that is, $A_x(y) = A(x, y)$.
If~$f\colon V \to \R$ is a function and~$U \subseteq V$ is a set, denote
by~$f[U]$ the restriction of~$f$ to~$U$.

We are now ready to give our main definition.

\begin{definition}%
\label{def:theta-body}%
Let~$H = (V, E)$ be an $r$-uniform hypergraph.  For~$r = 1$, the
\defi{theta body} of~$H$ is~$\thb(H) = \ind(H)$.  For~$r\geq 2$, the
\defi{theta body} of~$H$ is
\[
  \begin{split}
    \thb(H) = \{\, f \in \R^V :\ & \text{there is~$F \in \R^{V \times V}$
      such that $f = \diag F$},\\
    & F_x[V_x] \in F(x, x) \thb(H_x)\quad\text{for every~$x \in V$, and}\\
    & \text{$\borderM(F)$ is positive semidefinite}\,\},
  \end{split}
\]
where, if a link~$H_x$ is empty, no constraint is imposed on the row~$F_x$.
\end{definition}

Since the links of an $r$-uniform hypergraph are $(r-1)$-uniform
hypergraphs, we have a recursive definition.  When~$r = 2$, we
have~$\thb(H_x) = \{0\}$ for every nonempty link, and so we recover the
usual definition~\eqref{eq:theta-graph} of the theta body of a graph.

The theta number can now be extended to hypergraphs: given a weight
function~$w \in \R^V$, the \defi{theta number} of $H$ for~$w$ is
\begin{equation} \label{eq:theta-number}
  \vartheta(H, w) = \max\{\, w^\tp f : f \in \thb(H)\,\}.
\end{equation}
For unit weights, we write~$\vartheta(H)$ instead of~$\vartheta(H, \one)$.

In~\S\ref{sec:theta-properties}, we will see how~$\thb(H)$ defined above is
in many ways analogous to the theta body of a graph defined
in~\eqref{eq:theta-graph}.  In particular, we will see in
Theorem~\ref{thm:theta-basic} that
\[
  \ind(H) \subseteq \thb(H) \subseteq \clique(H),
\]
and therefore~$\alpha(H, w) \leq \vartheta(H, w)$ for every weight
function~$w$.  Moreover, as shown in Theorem~\ref{thm:polytime}, it is
possible to optimize linear functions over $\thb(H)$ in polynomial time.


\subsection{The weighted fractional chromatic number}%
\label{sec:chromatic}

Let~$H = (V, E)$ be an $r$-uniform hypergraph for some~$r \geq 2$.  The
\defi{chromatic number} of~$H$, denoted by~$\chi(H)$, is the minimum number
of colors needed to color the vertices of~$H$ in such a way that no edge is
monochromatic.  In other words,~$\chi(H)$ is the minimum number of disjoint
independent sets needed to partition the vertex set of~$H$.

Given~$w \in \Rplus^V$, the \defi{weighted fractional chromatic number}
of~$H$ is
\begin{defsymb}{$\chi^*(H, w)={}$}
  minimum of~$\lambda_1 + \cdots + \lambda_k$, where~$\lambda_1$,
  \dots,~$\lambda_k \geq 0$ and there are independent sets~$I_1$,
  \dots,~$I_k$ such
  that~$\lambda_1 \chi_{I_1} + \cdots + \lambda_k \chi_{I_k} = w$.
\end{defsymb}
When~$w = \one$ is the constant-one function,~$\chi^*(H, w)$ is the
\defi{fractional chromatic number}, denoted simply by~$\chi^*(H)$.  Note
also that~$k$ is not specified, so we may consider any number of
independent sets.  In this way, if~$w = \one$ and the~$\lambda_i$ are
required to be integers, then we get the chromatic number,
so~$\chi^*(H) \leq \chi(H)$.

For the chromatic or weighted fractional chromatic number, the case~$r = 1$
is degenerate: if the hypergraph has an edge, then there is no coloring,
hence the restriction to~$r \geq 2$.

For a graph~$G = (V, E)$ and a weight function~$w \in \Rplus^V$, it is
known~\cite[Chapter~9]{GrotschelLS1988} that
$\vartheta(G, w) \leq \chi^*(\overline{G}, w)$.  (The same inequality for
the chromatic number and~${w = \one}$ was proved by
Lovász~\cite{Lovasz1979}.)  Corollary~\ref{cor:theta-chi} generalizes this
inequality to the setting of hypergraphs: if~$H = (V, E)$ is an $r$-uniform
hypergraph and~$w \in \Rplus^V$ is a weight function,
then~$\vartheta(H, w) \leq (r-1)\chi^*(\overline{H}, w)$.


\subsection{The Hoffman bound}%

The Lovász theta number is also related to a well-known spectral upper
bound for the independence number of regular graphs, originally due to
Hoffman.  If~$G$ is a $d$-regular graph on~$n$ vertices and if~$\lambda$ is
the smallest eigenvalue of its adjacency matrix, then
\[
  \alpha(G) \leq \frac{-\lambda}{d - \lambda} n;
\]
this upper bound for the independence number is known as the \defi{Hoffman
  bound}.

The Hoffman bound connects spectral graph theory with extremal
combinatorics, and as such has found many applications in combinatorics and
theoretical computer science.  Recently, it has been extended to the
high-dimensional setting of edge-weighted hypergraphs by Filmus, Golubev,
and Lifshitz~\cite{FilmusGL2021}, who also derived interesting applications
in extremal set theory.

Lovász~\cite[Theorem~9]{Lovasz1979} showed that the theta
number~$\vartheta(G)$ is always at least as good as the Hoffman bound, that
is, $\alpha(G) \leq \vartheta(G) \leq -\lambda n/(d - \lambda)$ for every
$d$-regular graph~$G$.  In~\S\ref{sec:hoffman} we will extend Lovász's
result to hypergraphs, showing that the hypergraph theta
number~$\vartheta(H)$ is also at least as good as the high-dimensional
Hoffman bound.


\subsection{The antiblocker of the theta body}%
\label{sec:antib-def}

A convex set~$K \subseteq \R^n$ is of \defi{antiblocking type}
if~$\emptyset \neq K \subseteq \Rplus^n$ and if~$x \in K$
and~$0 \leq y \leq x$ implies that~$y \in K$.  The \defi{antiblocker}
of~$K$ is
\[
  A(K) = \{\, x \in \Rplus^n : \text{$x^\tp y \leq 1$ for all~$y \in
    K$}\,\}.
\]
Note that the antiblocker of a convex set of antiblocking type is also a
convex set of antiblocking type.  If~$K$ is also assumed to be closed,
then~$A(A(K)) = K$ (see Grötschel, Lovász, and
Schrijver~\cite[p.~11]{GrotschelLS1988}).

If~$G$ is a graph, then the antiblocker of~$\thb(G)$
is~$\thb(\overline{G})$ (see Grötschel, Lovász, and
Schrijver~\cite[Chapter~9]{GrotschelLS1988}).  This fact is essential for
proving that a graph is perfect if and only if its theta body is a
polytope.

The same, however, does not hold for hypergraphs in general.
In~\S\ref{sec:antiblocker} we will describe the antiblocker of~$\thb(H)$
explicitly, and this will lead to another relaxation of~$\ind(H)$ and
corresponding bounds for the weighted independence number and the weighted
fractional chromatic number.


\subsection{Symmetry and applications}%

When a hypergraph is highly symmetric, it is possible to greatly simplify
the optimization problem giving the theta number, as we explore
in~\S\ref{sec:symmetry}.

By exploiting symmetry we are able to explicitly compute the theta number
in the following two illustrative cases.  In~\S\ref{sec:mantel} we consider
a family of hypergraphs related to Mantel's theorem in extremal graph
theory.  In this toy example, we compute the theta number of these
hypergraphs, showing that it gives a tight bound for the independence
number leading to a proof of Mantel's theorem.

In~\S\ref{sec:hamming} we consider $3$-uniform hypergraphs over the Hamming
cube whose edges are all triangles with a given side length in Hamming
distance.  We give a closed formula for the theta number, and we show
numerical results supporting our conjecture (see
Conjecture~\ref{conj:hamming}) that the density of such triangle-avoiding
sets in the Hamming cube decays exponentially fast with the dimension.


\subsection{Notation}

For an integer~$n\geq 1$ we write~$[n] = \{1, \ldots, n\}$.  For a set~$V$
and~$S \subseteq V$ we denote by~$\chi_S\colon V \to \R$ the
\defi{characteristic function} of~$S$, which is defined by~$\chi_S(x) = 1$
if~$x \in S$ and~$\chi_S(x) = 0$ otherwise.  If~$f\colon V \to \R$ is a
function and~$S \subseteq V$, then~$f(S) = \sum_{x \in S} f(x)$.  The
collection of all $r$-subsets of~$V$ is denoted by~$\binom{V}{r}$.

If $H = (V, E)$ is an $r$-uniform hypergraph, we denote by $\overline{H}$
the \defi{complement} of~$H$, which is the hypergraph with vertex set $V$
and edge set $\binom{V}{r} \setminus E$.

We denote by~$\diag A$ the vector giving the diagonal of a square
matrix~$A$.  The \defi{trace inner product} between symmetric matrices~$A$,
$B \in \R^{n \times n}$
is~$\langle A, B \rangle = \trace AB = \sum_{i,j=1}^n A_{ij} B_{ij}$.
Positive semidefinite matrices are always symmetric.  For a symmetric
matrix~$A$ with diagonal~$a$, we write
\[
  \borderM(A) =
  \begin{pmatrix}
    1&a^\tp\\a&A
  \end{pmatrix}.
\]


\section{Properties of the theta body}%
\label{sec:theta-properties}

Given an $r$-uniform hypergraph~$H = (V, E)$ for~$r \geq 2$, it is useful
to consider the lifted version of the theta body as given in
Definition~\ref{def:theta-body}, namely
\[
  \begin{split}
    \lthb(H) = \{\, F \in \R^{V \times V} :\
    &F_x[V_x] \in F(x, x) \thb(H_x)\text{ for every~$x \in V$ and}\\
    &\text{$\borderM(F)$ is positive semidefinite}\,\}.
  \end{split}
\]
Note that~$\thb(H)$ is the projection of~$\lthb(H)$ onto the subspace of
diagonal matrices, being therefore a projected spectrahedron.

\begin{theorem}%
  \label{thm:theta-basic}
  If~$H$ is an $r$-uniform hypergraph, then~$\thb(H)$ is compact, convex,
  and satisfies
  \begin{equation}
    \label{eq:theta-inclusion}
    \ind(H) \subseteq \thb(H) \subseteq \clique(H).
  \end{equation}
\end{theorem}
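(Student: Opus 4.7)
The plan is to prove all three assertions simultaneously by induction on the uniformity~$r$. The base case $r = 1$ is straightforward: since $\thb(H) = \ind(H)$ is a polytope, compactness and convexity are immediate, and a direct check shows that in this degenerate setting the independence and clique polytopes in fact coincide (both equal $\{f\in [0,1]^V : f(x) = 0 \text{ for every loop } x\}$), giving both inclusions.

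For the inductive step ($r \geq 2$) I would handle convexity and compactness first. \emph{Convexity} follows directly from Definition~\ref{def:theta-body}: if $(f_i, F_i)$ witness membership in $\thb(H)$ for $i = 1, 2$, then $\lambda F_1 + (1-\lambda) F_2$ witnesses $\lambda f_1 + (1-\lambda) f_2$, the only subtlety being that the link constraint is preserved using convexity of $\thb(H_x)$ (inductive hypothesis) together with $F_i(x,x) \geq 0$, an immediate consequence of $\borderM(F_i) \succeq 0$. For \emph{compactness}, the $2\times 2$ minors of $\borderM(F)$ force $0 \leq F(x,x) \leq 1$, and Cauchy--Schwarz then bounds all off-diagonal entries, so $\lthb(H)$ is bounded; closedness of $\lthb(H)$ follows from closedness of each link constraint, which in turn uses compactness of $\thb(H_x)$ from the inductive hypothesis. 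Then $\thb(H)$ is compact as a projection of the compact set $\lthb(H)$ onto the diagonal.

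For $\ind(H) \subseteq \thb(H)$, I would exhibit the rank-one witness $F = \chi_I \chi_I^\tp$ for every independent set $I$. The bordered matrix $\borderM(F)$ is rank-one PSD by construction, and for each $x \in I$ the restricted row $F_x[V_x]$ equals $\chi_{I \cap V_x}$; the key observation is that $I \cap V_x$ is independent in $H_x$, since any edge of $H_x$ contained in it would lift to an edge of $H$ contained in $I$. Hence $\chi_{I \cap V_x} \in \ind(H_x) \subseteq \thb(H_x)$ by the inductive hypothesis, and convexity of $\thb(H)$ extends the inclusion to all of $\ind(H)$.

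The main obstacle is the clique inequality $f(C) \leq r - 1$ for $f \in \thb(H)$ and every clique $C$; the bounds $0 \leq f(x) \leq 1$ are again immediate from $2 \times 2$ minors of $\borderM(F)$. If $|C| < r$ then $f(C) \leq |C| \leq r - 1$, so assume $|C| \geq r$. My plan is to test $\borderM(F) \succeq 0$ against the vector with value $r-1$ on the border coordinate and $-1$ on $C$, which yields
\[
  0 \leq (r-1)^2 - 2(r-1) f(C) + \sum_{x, y \in C} F(x, y).
\]
The remaining task is the companion upper bound $\sum_{x, y \in C} F(x, y) \leq (r-1) f(C)$, and this is where the inductive hypothesis becomes essential: for each $x \in C$, the set $C \setminus \{x\}$ is a clique of $H_x$ that lies inside $V_x$ (using $|C| \geq r$, so every $y \in C \setminus \{x\}$ sits in an $r$-subset of $C$ with $x$, which is an edge of $H$), so when $f(x) > 0$ the normalized row $F_x[V_x]/f(x) \in \thb(H_x)$ satisfies $\sum_{y \in C \setminus \{x\}} F(x, y) \leq (r-2) f(x)$ by induction, while the case $f(x) = 0$ is handled by the standard fact that a zero diagonal entry of a PSD matrix forces the whole row to vanish. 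Summing over $x \in C$ and combining with $\sum_{x \in C} F(x, x) = f(C)$ delivers the required bound, from which $f(C) \leq r - 1$ is immediate.
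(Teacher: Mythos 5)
Your proposal is correct and follows essentially the same route as the paper: induction on $r$, the rank-one witness $\chi_I\chi_I^\tp$ for the first inclusion, and the test vector $(r-1;-\chi_C)$ combined with the induction hypothesis applied to the clique $C\setminus\{x\}$ of $H_x$ for the second. The only cosmetic deviations are your use of Cauchy--Schwarz for boundedness where the paper bounds the Frobenius norm by the trace, and your separate treatment of the case $f(x)=0$, which the paper sidesteps since $F_x[V_x]\in F(x,x)\thb(H_x)$ already forces that row to vanish when $F(x,x)=0$.
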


\begin{proof}
The proof proceeds by induction on~$r$.  The base case is~$r = 1$, for
which the statement is easily seen to hold.

Assume~$r \geq 2$.  By the induction hypothesis, the statement of the
theorem holds for the theta body of every link.  This implies
that~$\lthb(H)$ is convex, and hence~$\thb(H)$ is convex.

Let us show that $\lthb(H)$ is compact and, since $\thb(H)$ is a projection
of $\lthb(H)$, it will follow that $\thb(H)$ is compact.

Let~$(F_k)_{k \geq 1}$ be a sequence of points in~$\lthb(H)$ that converges
to~$F$.  Immediately we have that~$\borderM(F)$ is positive semidefinite.
Now fix~$x \in V$ and let~$a^\tp f \leq \beta$ be any valid inequality
for~$\thb(H_x)$.  Then
\[
  a^\tp F_x[V_x] = \lim_{k\to\infty} a^\tp (F_k)_x[V_x] \leq
  \lim_{k\to\infty} F_k(x, x) \beta = F(x, x)\beta,
\]
and we see that~$F_x[V_x] \in F(x, x) \thb(H_x)$, proving that~$\lthb(H)$
is closed.

To see that~$\lthb(H)$ is bounded, note that for every~$x \in V$
the~$2\times 2$ submatrix
\[
  \begin{pmatrix}
    1&f(x)\\
    f(x)&f(x)
  \end{pmatrix}
\]
of~$\borderM(F)$ is positive semidefinite (where~$f = \diag F$),
hence~$f(x) - f(x)^2 \geq 0$ and so~$|F(x, x)| = |f(x)| \leq 1$ for
all~$x \in V$.  This implies that $\trace F \leq |V|$ and, since~$F$ is
positive semidefinite, the Frobenius norm\footnote{The \defi{Frobenius
    norm} of a symmetric matrix~$A \in \R^{n \times n}$ is the Euclidean
  norm of~$A$ considered as an $n^2$-dimensional vector.  If~$A$ has
  eigenvalues~$\lambda_1$, \dots,~$\lambda_n$, then the square of the
  Frobenius norm is~$\lambda_1^2 + \cdots + \lambda_n^2$, hence the
  Frobenius norm is at most~$\trace A = \lambda_1 + \cdots + \lambda_n$
  when all eigenvalues are nonnegative, that is, when~$A$ is positive
  semidefinite.} of~$F$ is at most~$|V|$.  This finishes the proof
that~$\lthb(H)$ is compact.

It remains to show that~\eqref{eq:theta-inclusion} holds.  For the first
inclusion, let~$I \subseteq V$ be an independent set.  For every~$x \in V$,
if~$x \in I$, then~$I \cap V_x$ is an independent set of the link~$H_x$, so
by the induction hypothesis~$\chi_I[V_x] \in \thb(H_x)$.  It follows
that~$\chi_I \chi_I^\tp \in \lthb(H)$, and so $\ind(H) \subseteq \thb(H)$.

For the second inclusion in~\eqref{eq:theta-inclusion}, note first that
$\thb(H) \subseteq [0, 1]^V$.  Let~$C \subseteq V$ be a clique and
let~$F \in \lthb(H)$; write~$f = \diag F$.  If~$|C| \leq r - 1$, then
since~$f \leq \one$ we have~$f(C) \leq |C| \leq r - 1$ and we are done.

So assume~$|C| \geq r$.  Since~$\borderM(F)$ is positive semidefinite we
have
\begin{equation}
  \label{eq:clique-partial}
  0 \leq (r-1; -\chi_C)^\tp \borderM(F) (r-1; -\chi_C)
  = (r-1)^2 - 2(r-1)\chi_C^\tp f + \chi_C^\tp F \chi_C.
\end{equation}
Since~$|C| \geq r$, every $r$-element subset of~$C$ is an edge of~$H$, and
so for every~$x \in C$ we know that~$C \setminus \{x\} \subseteq V_x$ is a
clique of the link~$H_x$.  By the induction hypothesis we know
that~$\thb(H_x) \subseteq \clique(H_x)$, hence
\[
  \begin{split}
    \chi_C^\tp F \chi_C = \sum_{x, y \in C} F(x, y)
    &= \sum_{x \in C} \biggl(F(x, x) + \sum_{y \in C \setminus \{x\}} F(x, y)\biggr)\\
    &\leq \sum_{x \in C} (F(x, x) + F(x, x) (r - 2)) = (r-1)
    f(C).
  \end{split}
\]
Together with~\eqref{eq:clique-partial} we get
$0 \leq (r-1)^2 - (r-1)f(C)$, whence~$f(C) \leq r-1$ as wished.
\end{proof}

As a corollary we get that~$\thb(H)$ is a formulation of~$\ind(H)$, that
is, the integer hull of the theta body is the independent-set polytope.

\begin{corollary}
  If~$H$ is an $r$-uniform hypergraph and if~$f \in \thb(H)$ is an integral
  vector, then~$f$ is the characteristic function of an independent set
  of~$H$.
\end{corollary}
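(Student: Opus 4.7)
The plan is to deduce this immediately from Theorem~\ref{thm:theta-basic}, which gives the inclusion $\thb(H) \subseteq \clique(H)$, combined with the fact (already observed in the introduction, just after the definition~\eqref{eq:QIND} of $\clique(H)$) that the integer points of $\clique(H)$ are exactly the characteristic functions of independent sets of $H$. So the whole argument is essentially a two-step chase of inclusions.

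First, I would invoke Theorem~\ref{thm:theta-basic} to conclude that $f \in \clique(H)$. Since $f$ is integral and $\clique(H) \subseteq [0,1]^V$, we have $f \in \{0,1\}^V$, so $f = \chi_S$ for some $S \subseteq V$. It then remains to show that $S$ is independent.

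Second, I would verify the claim about integer points of $\clique(H)$ for completeness: suppose for contradiction that $S$ contains an edge $e \in E$. Since $e$ has exactly $r$ elements and its only $r$-subset is $e$ itself (which is an edge), $e$ is a clique of $H$. But then $f(e) = \chi_S(e) = |e| = r$, contradicting the clique inequality $f(e) \leq r - 1$ valid on $\clique(H)$. Hence $S$ is independent.

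There is no real obstacle here; the only thing to be careful about is the degenerate case $r = 1$, where the statement reduces to $\thb(H) = \ind(H)$ by definition and is trivial. Everything else is an immediate consequence of the inclusion proved in Theorem~\ref{thm:theta-basic}.
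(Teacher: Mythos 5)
Your proof is correct and follows the paper's own argument exactly: both deduce $f \in \clique(H) \subseteq [0,1]^V$ from Theorem~\ref{thm:theta-basic} and then use the clique inequalities (applied to an edge, which is a clique) to rule out any edge being contained in the support of $f$. You merely spell out the last step that the paper leaves implicit as ``the conclusion follows.''
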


\begin{proof}
As~$\thb(H) \subseteq \clique(H) \subseteq [0, 1]^V$, we know that~$f$
is a~0--1 vector that satisfies all clique inequalities, and the conclusion
follows.
\end{proof}

Since~$\ind(H) \subseteq \thb(H)$, it follows immediately from the
definition~\eqref{eq:theta-number} that $\alpha(H, w) \leq \vartheta(H, w)$
for every~$w \in \R^V$.  What about a lower bound for the chromatic number?

For a graph~$G = (V, E)$ and~$w \in \Rplus^V$ we also
have~$\vartheta(G, w) \leq \chi^*(\overline{G}, w)$.  (Recall the
definition of $\chi^*$ from \S\ref{sec:chromatic}.)  A simple example shows
that the same cannot be true for hypergraphs in general.  Indeed,
fix~$r \geq 3$ and let~$H$ be the complete $r$-uniform hypergraph on~$r$
vertices (that is,~$H$ has exactly one edge containing all of its
vertices).  The complement of~$H$ is the empty hypergraph.
Then~$\vartheta(H) = r-1$, whereas~$\chi^*(\overline{H}) = 1$, and the
inequality fails to hold.  It can, however, be extended, and this simple
example shows that the extension is tight.

\begin{corollary}%
\label{cor:theta-chi}%
If~$H$ is an $r$-uniform hypergraph and~$w \in \Rplus^V$,
then~$\alpha(H, w) \leq \vartheta(H, w)$.  If moreover~$r \geq 2$,
then~$\vartheta(H, w) \leq (r-1) \chi^*(\overline{H}, w)$.
\end{corollary}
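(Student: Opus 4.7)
The first inequality is essentially immediate from what has already been established: by Theorem~\ref{thm:theta-basic}, $\ind(H) \subseteq \thb(H)$, so for any independent set $I$ the characteristic function $\chi_I$ lies in $\thb(H)$ and satisfies $w^\tp \chi_I = w(I)$. Taking the maximum over all independent sets on the left and over $\thb(H)$ on the right yields $\alpha(H,w) \le \vartheta(H,w)$.

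For the second inequality, the plan is to exploit the other half of Theorem~\ref{thm:theta-basic}, namely $\thb(H) \subseteq \clique(H)$, which by the definition~\eqref{eq:QIND} says that every $f \in \thb(H)$ satisfies $f(C) \le r-1$ for every clique $C$ of $H$. Recall also that cliques of $H$ are exactly the independent sets of $\overline{H}$, so any feasible decomposition in the definition of $\chi^*(\overline{H}, w)$ has the form $w = \sum_{i=1}^k \lambda_i \chi_{C_i}$ with $\lambda_i \ge 0$ and each $C_i$ a clique of $H$.

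Now I would fix any $f \in \thb(H)$ and any such decomposition and simply compute:
\[
  w^\tp f \;=\; \sum_{i=1}^k \lambda_i\, \chi_{C_i}^\tp f \;=\; \sum_{i=1}^k \lambda_i\, f(C_i) \;\le\; (r-1)\sum_{i=1}^k \lambda_i.
\]
Taking the maximum over $f \in \thb(H)$ on the left and the infimum over all valid decompositions on the right gives $\vartheta(H,w) \le (r-1)\,\chi^*(\overline{H},w)$.

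There is no real obstacle here: both inequalities are direct consequences of the sandwich $\ind(H) \subseteq \thb(H) \subseteq \clique(H)$ proved in Theorem~\ref{thm:theta-basic}, combined with standard LP-duality-flavored manipulations of the definitions of $\alpha(H,w)$ and $\chi^*(\overline{H},w)$. The only subtle point worth double-checking is that the weight vectors $w$ in the definition of $\chi^*$ are nonnegative, which is why the hypothesis $w \in \Rplus^V$ is needed to make the sum $\sum_i \lambda_i f(C_i)$ bound by $(r-1)\sum_i \lambda_i$ rather than requiring any sign control on $f(C_i)$; since $\thb(H) \subseteq [0,1]^V$ this is automatic anyway.
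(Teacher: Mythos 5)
Your proof is correct and follows exactly the same route as the paper: the first inequality from $\ind(H)\subseteq\thb(H)$, and the second by plugging a feasible decomposition $w=\sum_i\lambda_i\chi_{C_i}$ from the definition of $\chi^*(\overline{H},w)$ into $w^\tp f$ and invoking the clique inequalities $f(C_i)\le r-1$ coming from $\thb(H)\subseteq\clique(H)$. Nothing to add.
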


\begin{proof}
The first statement follows immediately from~$\ind(H) \subseteq \thb(H)$.

The second statement follows from~$\thb(H) \subseteq \clique(H)$.  Indeed,
let~$\lambda_1$, \dots,~$\lambda_k$ be nonnegative numbers and~$C_1$,
\dots,~$C_k$ be independent sets of~$\overline{H}$ such
that~$\lambda_1 \chi_{C_1} + \cdots + \lambda_k \chi_{C_k} = w$.
If~$f \in \thb(H)$, then~$f$ satisfies all clique inequalities, so since
each~$C_i$ is a clique of~$H$ we have~$\chi_{C_i}^\tp f = f(C_i) \leq r-1$.
Hence
\[
  w^\tp f = (\lambda_1 \chi_{C_1} + \cdots + \lambda_k \chi_{C_k})^\tp f
  \leq (r-1) (\lambda_1 + \cdots + \lambda_k),
\]
and we are done.
\end{proof}

Just like the theta body of a graph, the theta body of a hypergraph can be
shown to be of antiblocking type (see~\S\ref{sec:antib-def} for
background), and this leads to an inequality description of the theta body
in terms of the theta number.

\begin{theorem}%
\label{thm:theta-antiblocking}%
If~$H = (V, E)$ is an $r$-uniform hypergraph, then~$\thb(H)$ is of
antiblocking type and $\thb(H) = \{\, f \in \Rplus^V :
\text{$w^\tp f \leq \vartheta(H, w)$ for all~$w \in \Rplus^V$}\, \}$.
\end{theorem}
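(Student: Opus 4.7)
The plan is to prove both assertions by induction on~$r$, with the antiblocking property as the main step and the inequality description then following from a standard separation argument. The base case $r=1$ is immediate, since $\thb(H)=\ind(H)$ is nonempty, contained in $[0,1]^V$, and visibly down-closed.

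For the inductive step I focus on antiblocking: given $f\in\thb(H)$ with certificate $F\in\lthb(H)$ and any $g\in\Rplus^V$ with $g\le f$ coordinate-wise, the goal is to produce a certificate $G\in\lthb(H)$ of~$g$. Since $\borderM(F)$ is positive semidefinite, factor it as $\borderM(F)=UU^\tp$, where the first row of $U$ is a unit vector $u_0$ and the remaining rows $(u_x)_{x\in V}$ satisfy $u_x\cdot u_0=\|u_x\|^2=f(x)$ and $u_x\cdot u_y=F(x,y)$. After embedding into a larger ambient space, choose an orthonormal family $(e_x)_{x\in V}$ orthogonal to $u_0$ and to each $u_y$, and, for each $x\in V$ with $f(x)>0$, define
\[
  u'_x=\frac{g(x)}{f(x)}\,u_x+\gamma_x e_x\quad\text{with}\quad \gamma_x=\sqrt{g(x)\bigl(1-g(x)/f(x)\bigr)},
\]
setting $u'_x=0$ when $f(x)=0$ (which forces $g(x)=0$). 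A direct computation gives $u'_x\cdot u_0=\|u'_x\|^2=g(x)$ and, for $x\ne y$, $u'_x\cdot u'_y=(g(x)/f(x))(g(y)/f(y))\,F(x,y)$. Hence the matrix $G(x,y):=u'_x\cdot u'_y$ has $\diag G=g$, and $\borderM(G)$, being the Gram matrix of $u_0$ and the~$u'_x$, is positive semidefinite.

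The key point is the link condition $G_x[V_x]\in g(x)\thb(H_x)$. For $g(x)>0$, dividing by $g(x)$ expresses $G_x[V_x]/g(x)$ as the coordinate-wise product of $F_x[V_x]/f(x)\in\thb(H_x)$ with the vector $(g(y)/f(y))_{y\in V_x}\in[0,1]^{V_x}$ (under the convention $0/0=0$). By the induction hypothesis applied to the $(r-1)$-uniform link $H_x$, the body $\thb(H_x)$ is of antiblocking type, so this rescaled vector still lies in $\thb(H_x)$. The case $g(x)=0$ is trivial: $u'_x=0$ forces $G_x[V_x]=0$, which is what the condition requires.

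For the inequality description, let $K$ denote the right-hand side. The inclusion $\thb(H)\subseteq K$ is immediate from $\thb(H)\subseteq\Rplus^V$ (Theorem~\ref{thm:theta-basic}) together with the definition of $\vartheta(H,w)$. For the reverse inclusion, suppose $f\in K\setminus\thb(H)$; by the separation theorem (using convexity and compactness of~$\thb(H)$), pick $w\in\R^V$ and $\alpha\in\R$ with $w^\tp f>\alpha\ge w^\tp f'$ for every $f'\in\thb(H)$. Writing $w^+\in\Rplus^V$ for the coordinate-wise positive part of~$w$, any $f'\in\thb(H)$ can be truncated by zeroing the coordinates where $w<0$, yielding $\tilde f'\in\thb(H)$ by the antiblocking property just proved; then $(w^+)^\tp f'=w^\tp\tilde f'\le\alpha$, so $\vartheta(H,w^+)\le\alpha<w^\tp f\le(w^+)^\tp f$, contradicting $f\in K$. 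The main obstacle is the Gram-based construction in the antiblocking step: the correction terms $\gamma_x e_x$ must be taken orthogonal to everything already present so that off-diagonal inner products rescale multiplicatively, which in turn makes the link constraint reduce cleanly to the lower-level antiblocking statement supplied by the induction.
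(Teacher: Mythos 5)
Your proof is correct but takes a genuinely different route from the paper's. You establish the antiblocking property first and directly, by a Gram-vector rescaling: from $\borderM(F) = UU^\tp$ with rows $u_0$ and $(u_x)_{x\in V}$, you form $u'_x = (g(x)/f(x))\,u_x + \gamma_x e_x$ and let $G$ be the resulting Gram matrix; the off-diagonal entries rescale multiplicatively by $g(x)g(y)/(f(x)f(y))$, so the link constraint $G_x[V_x]\in g(x)\thb(H_x)$ reduces cleanly to the induction hypothesis that $\thb(H_x)$ is of antiblocking type. You then derive the inequality description from antiblocking by a separation argument and a truncation at the negative coordinates of the separating functional. The paper proceeds in the opposite order: it first proves the weaker identity $\vartheta(H,w)=\vartheta(H,w_+)$ using the much simpler Hadamard product $\bar{F} = F\circ\chi_S\chi_S^\tp$ (which handles only the special case $g=f\chi_S$, and for which the trivial inequality $0\le\bar{F}_x[V_x]\le F_x[V_x]$ with $\bar{F}(x,x)=F(x,x)$ invokes the link-level antiblocking immediately), reads off the inequality description from this, and finally obtains full antiblocking as an instantaneous corollary of the description. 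Your construction is genuinely more elaborate for a reason: a rank-one Hadamard mask $\mu\mu^\tp$ with $\mu(x)=g(x)/f(x)$ would produce diagonal entries $g(x)^2/f(x)$ rather than $g(x)$, which is precisely the defect the auxiliary orthogonal vectors $e_x$ (or equivalently, writing $G=\Lambda F\Lambda + \diag(g-\Lambda^2 f)$ with $\Lambda=\diag(g/f)$) repair. This buys you a self-contained, constructive proof that $\thb(H)$ is down-closed under arbitrary coordinatewise shrinkage, whereas the paper's shorter route exploits that only the truncation case is actually needed to reach the inequality description, after which full antiblocking comes for free.
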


\begin{proof}
We proceed by induction on~$r$.  The statement is immediate for the base
case~$r = 1$, so assume~$r \geq 2$.  We claim: if~$w \in \R^V$
and~$w_+(x) = \max\{0, w(x)\}$ for all~$x \in V$,
then~$\vartheta(H, w) = \vartheta(H, w_+)$.

Since $\thb(H) \subseteq \Rplus^V$, it is clear that
$\vartheta(H, w) \leq \vartheta(H, w_+)$ for every~$w \in \R^V$; let us now
prove the reverse inequality.  Let~$F \in \lthb(H)$ be a matrix such
that~$w_+^\tp f = \vartheta(H, w_+)$, where~$f = \diag F$.
Let~$S = \{\, x \in V : w(x) \geq 0\,\}$ and denote by~$\bar{F}$ the
Hadamard (entrywise) product of~$F$ and~$\chi_S \chi_S^\tp$;
write~$\bar{f} = \diag\bar{F}$.

Note that~$\borderM(\bar{F})$ is the Hadamard product of~$\borderM(F)$
and~$(1; \chi_S)(1; \chi_S)^\tp$, hence $\borderM(\bar{F})$ is positive
semidefinite.  For every~$x \in V$ we have
$0 \leq \bar{F}_x[V_x] \leq F_x[V_x]$, and so the induction hypothesis
implies that~$\bar{F}_x[V_x] \in \bar{F}(x, x) \thb(H_x)$.
Hence~$\bar{F} \in \lthb(H)$, and
$\vartheta(H, w) \geq w^\tp \bar{f} = w_+^\tp f = \vartheta(H, w_+)$,
proving the claim.

The inequality description follows immediately.  Every~$f \in \thb(H)$ is
nonnegative and satisfies~$w^\tp f \leq \vartheta(H, w)$ for
all~$w \in \Rplus^V$.  For the reverse inclusion note that, since~$\thb(H)$
is closed and convex,
\[
  \thb(H) = \{\, f \in \R^V : \text{$w^\tp f \leq \vartheta(H, w)$ for
    all~$w \in \R^V$}\, \}.
\]
So let~$f\geq 0$ be such that~$w^\tp f \leq \vartheta(H, w)$ for
all~$w \in \Rplus^V$.  For~$w \in \R^V$, let~$w_+$ be defined as above,
so~$w_+ \geq 0$.  Then, for every~$w \in \R^V$ we have
\[
  w^\tp f \leq w_+^\tp f \leq \vartheta(H, w_+) = \vartheta(H, w),
\]
and we see that~$f \in \thb(H)$.

To finish, let us show that the theta body is of antiblocking type.
If~$f \in \thb(H)$ and~$0\leq g \leq f$, then for every~$w \in \Rplus^V$ we
have $w^\tp g \leq w^\tp f \leq \vartheta(H, w)$, and so~$g \in \thb(H)$.
\end{proof}

Finally, for every fixed~$r \geq 1$ it is possible to optimize
over~$\thb(H)$ in polynomial time.  More precisely, in the language of
Grötschel, Lovász, and Schrijver~\cite[Chapter~4]{GrotschelLS1988}, we
have:

\begin{theorem}%
\label{thm:polytime}%
If~$r \geq 1$ is fixed, then the weak optimization problem over~$\thb(H)$
can be solved in polynomial time for every $r$-uniform hypergraph~$H$.
\end{theorem}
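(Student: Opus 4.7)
The plan is to prove Theorem~\ref{thm:polytime} by induction on~$r$, using throughout the polynomial-time equivalence between weak separation and weak optimization for convex bodies from Grötschel, Lovász, and Schrijver~\cite[Chapter~4]{GrotschelLS1988}.

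For the base case~$r = 1$ one has~$\thb(H) = \ind(H)$, which is simply the subcube of~$[0, 1]^V$ cut out by setting~$f(x) = 0$ for each singleton edge~$\{x\} \in E$; weak optimization over it is immediate. For the inductive step with~$r \geq 2$, assume weak optimization (hence weak separation) over~$\thb(H')$ is polynomial-time solvable for every $(r-1)$-uniform hypergraph~$H'$, and in particular for every link~$H_x$ of~$H$. Since~$\thb(H)$ is the coordinate projection~$F \mapsto \diag F$ of the lifted body~$\lthb(H) \subseteq \R^{V \times V}$, it suffices to exhibit a polynomial-time weak separation oracle for~$\lthb(H)$; the ellipsoid method then yields weak optimization over~$\lthb(H)$, which projects to weak optimization over~$\thb(H)$.

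The proposed oracle, on input~$F \in \R^{V \times V}$, first checks positive semidefiniteness of~$\borderM(F)$ by computing its smallest eigenvalue; if this is significantly negative, the corresponding eigenvector yields a separating hyperplane. Otherwise one has in particular~$F(x, x) \in [0, 1]$ for every~$x \in V$, from the~$2 \times 2$ principal minors of~$\borderM(F)$ as in the proof of Theorem~\ref{thm:theta-basic}. For each~$x \in V$ one then inspects the link constraint~$F_x[V_x] \in F(x, x)\,\thb(H_x)$: when~$F(x, x)$ is bounded away from zero, feed~$F_x[V_x]/F(x, x)$ to the inductive weak separation oracle for~$\thb(H_x)$, and lift any returned inequality~$a^\tp v \leq b$ to the valid inequality~$a^\tp F_x[V_x] \leq b\, F(x, x)$ for~$\lthb(H)$; when~$F(x, x)$ is close to zero, the same~$2 \times 2$ PSD minors already force the entire row~$F_x$ to be close to zero, so substantial violations of the link constraint are already detected in the PSD step.

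Each recursion layer makes at most~$|V|$ calls to the oracle one level down, and the ellipsoid method invokes a separation oracle only polynomially many times; since~$r$ is fixed, the total cost remains polynomial in~$|V|$. The main technical delicacy will be the careful bookkeeping of the~$\varepsilon$-tolerances inherent to \emph{weak} oracles as they accumulate through the~$r$ levels of recursion, together with keeping bit-lengths of intermediate data under control. This is standard: Theorem~\ref{thm:theta-basic} guarantees that~$\lthb(H)$ is compact and contained in a ball in~$\R^{V \times V}$ of polynomial radius, and the zero matrix is an explicit feasible point, so the body fits the framework of~\cite[Chapter~4]{GrotschelLS1988}.
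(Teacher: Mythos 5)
Your proposal follows the same high-level plan as the paper's proof: induct on $r$, work with the lifted body $\lthb(H)$, and reduce the optimization to a membership/separation test that queries the inductively-available oracles for the links. The separation oracle you sketch (test positive semidefiniteness of $\borderM(F)$, then normalize each row $F_x[V_x]/F(x,x)$ and query the link oracle, treating small $F(x,x)$ via the $2\times 2$ minors) is essentially the weak membership test in the paper, recast as separation.

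There is however a genuine gap in how you close the GLS framework. To turn a weak separation or membership oracle into weak optimization via the ellipsoid method, one needs not only a circumscribed ball but also an \emph{inscribed} ball $B(a_0, \rho) \subseteq \lthb(H)$ of known polynomial size; asserting that the zero matrix is feasible is not the same as exhibiting an interior ball. Worse, your induction starts at $r = 1$ and therefore must pass through $r = 2$, where $\lthb(H)$ is \emph{not} full-dimensional in $\R^{V\times V}$: the link constraints force $F(x,y) = 0$ on every edge $\{x,y\}$, so the body lies in a proper affine subspace and has empty interior. This is precisely why the paper takes $r = 2$ as the base of the induction, citing the known polynomial-time result for the graph theta body directly, and only for $r \geq 3$ supplies the inner-ball argument by observing that $\conv\{\,\chi_I\chi_I^\tp : |I| \leq 2\,\}$ is a full-dimensional subset of $\lthb(H)$ (this relies on $r\ge 3$ so that every set of size at most two is independent). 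To repair your proof you would need either to add such an inner-ball construction for $r \geq 3$ and handle $r = 2$ separately (e.g.\ by restricting to the affine hull, or by black-boxing the Grötschel--Lovász--Schrijver result as the paper does), or otherwise establish the needed inner radius at each level of the recursion.
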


\begin{proof}
The result is trivial for~$r = 1$.  For graphs, that is,~$r = 2$, the
statement was proven by Grötschel, Lovász, and
Schrijver~\cite[Theorem~9.3.30]{GrotschelLS1988}, and here it is easier to
take~$r = 2$ as our base case, as will become clear soon.  So we assume
that~$r \geq 3$ and that the weak optimization problem can be solved in
polynomial time for $(r-1)$-uniform hypergraphs; we want to show how to
solve the weak optimization problem in polynomial time for $r$-uniform
hypergraphs.
 
Let~$H = (V, E)$ be an $r$-uniform hypergraph.  If we show that we can solve the
weak optimization problem over the convex set~$\lthb(H)$, then we are done.  It
suffices~\cite[Chapter~4]{GrotschelLS1988} to show that $\lthb(H)$ has the
required inscribed and circumscribed balls of appropriate size, and that the
weak membership problem for $\lthb(H)$ can be solved in polynomial time.

It can be easily checked that all entries of a matrix in $\lthb(H)$ are
bounded in absolute value by $1$, and so the origin-centered ball of radius
$|V|$ circumscribes the theta body.  To find an inscribed ball, note that
the full-dimensional convex set
\[
  \conv\{\, \chi_I \chi_I^\tp \in \R^{V \times V} : \text{$I \subseteq V$,
    $|I| \leq 2$}\,\}
\]
is a subset of $\lthb(H)$, so it contains a ball which is also contained in
$\lthb(H)$.  (This assertion fails when~$H$ is a graph, which is why we
take~$r = 2$ as the base to simplify the proof.)

Now, given a symmetric matrix~$F \in \R^{V \times V}$, to test whether
$F \in \lthb(H)$ we first test whether~$\borderM(F)$ is positive
semidefinite using (for instance) Cholesky decomposition.  By induction,
the weak optimization problem for each link can be solved in polynomial
time, hence so can the weak membership problem for each link.  We then
finish by calling the weak membership oracle for each link.
\end{proof}


\section{Relationship to the Hoffman bound}%
\label{sec:hoffman}

Let~$G$ be a $d$-regular graph on~$n$ vertices and let~$\lambda$ be the
smallest eigenvalue of its adjacency matrix.  Hoffman showed that
\[
  \alpha(G) \leq \frac{-\lambda}{d-\lambda} n;
\]
the right-hand side above became know as the \textit{Hoffman bound}.
Hoffman never published this particular result, though he did publish a
similar lower bound~\cite{Hoffman1970} for the chromatic number which also
came to be known as the \textit{Hoffman bound}; see
Haemers~\cite{Haemers2021} for a historical account.
Lovász~\cite{Lovasz1979} showed that the theta number is always at least as
good as the Hoffman bound and that, when the graph is edge transitive, both
bounds coincide.  The Hoffman bound has also been extended to certain
infinite graphs, and its relation to extensions of the theta number has
been studied~\cite{BachocDOV2014}.

Filmus, Golubev, and Lifshitz~\cite{FilmusGL2021} extended the Hoffman bound to
edge-weighted hypergraphs and described several applications to extremal
combinatorics.  Our goal in this section is to show that our extension of the
theta number to hypergraphs is always at least as good as the extended Hoffman
bound.  We begin with the extension of Filmus, Golubev, and Lifshitz.

To simplify the presentation and to be consistent with the setup used so
far, we restrict ourselves to weighted hypergraphs without loops.  A
\defi{weighted $r$-uniform hypergraph} is a pair~$X = (V, \mu)$ where~$V$
is a finite set, called the \defi{vertex set} of the hypergraph, and~$\mu$
is a probability measure on~$\binom{V}{r}$.  The \defi{underlying
  hypergraph} of~$X$ is the $r$-uniform hypergraph on~$V$ whose edge set is
the support of~$\mu$.

Let~$X = (V, \mu)$ be a weighted $r$-uniform hypergraph and
let~$H = (V, E)$ be its underlying hypergraph.  For~$i = 1$, \dots,~$r-1$,
the measure~$\mu$ induces a probability measure~$\mu^{(i)}$
on~$\binom{V}{i}$ by the following experiment: we first choose an edge~$e$
of~$X$ according to~$\mu$ and then we choose an $i$-subset of~$e$ uniformly
at random.  Concretely, for~$\sigma \in \binom{V}{i}$ we have
\begin{equation}%
  \label{eq:mu-i}
  \mu^{(i)}(\sigma) = \binom{r}{i}^{-1} \mu(\{\, e \in E : \sigma \subseteq
  e\,\}).
\end{equation}
Note that~$\mu^{(1)}$ can be seen as a weight function on~$V$.  We define
the \defi{independence number} of~$X$
as~$\alpha(X) = \alpha(H, \mu^{(1)})$.

Let~$X^{(i)} \subseteq \binom{V}{i}$ be the support of~$\mu^{(i)}$.  We may
assume, without loss of generality, that~$X^{(1)} = V$, since vertices not
in the support of~$\mu^{(1)}$ are isolated and do not contribute to the
independence number.

The \defi{link} of~$\sigma \in X^{(i)}$ is the weighted $(r-i)$-uniform
hypergraph~$X_\sigma = (V, \mu_\sigma)$, where~$\mu_\sigma$ is the
probability measure on~$\binom{V}{r-i}$ defined by the following
experiment: sample a random edge~$e \in \binom{V}{r}$ according to~$\mu$
conditioned on~$\sigma \subseteq e$ and output~$e \setminus \sigma$.  We
also say that~$X_\sigma$ is an \defi{$i$-link} of~$X$.  Concretely,
for~$e \in \binom{V \setminus \sigma}{r-i}$ we have
\begin{equation}%
  \label{eq:mu-sigma}
  \mu_\sigma(e) = \frac{\mu(e \cup \sigma)}{\mu(\{\, e' \in E : \sigma
    \subseteq e'\,\})}.
\end{equation}
For a vertex~$x \in V$ we write~$X_x$ instead of~$X_{\{x\}}$ for the link
of~$x$.  Note that the underlying hypergraph of~$X_x$, minus its isolated
vertices, is exactly~$H_x$, the link of~$x$ in the underlying hypergraph
of~$X$, which we have used so far.

Equip~$\R^V$ with the inner product
\[
  \lhof f, g\rhof = \sum_{x \in V} f(x) g(x) \mu^{(1)}(x)
\]
for~$f$, $g \in \R^V$.  Since~$V$ is the support of~$\mu^{(1)}$, this inner
product is nondegenerate.  The \defi{normalized adjacency operator} of~$X$
is the operator~$T_X$ on~$\R^V$ such that
\[
  (T_X f)(x) = \sum_{y \in V} f(y) \mu_x^{(1)}(y)
\]
for all~$f \in \R^V$.  Here,~$\mu_x^{(1)} = (\mu_x)^{(1)}$ is the measure
on~$V$ induced by the measure~$\mu_x$ defining the link of~$x$.
Combine~\eqref{eq:mu-i} and~\eqref{eq:mu-sigma} to get
\begin{equation}%
  \label{eq:mu-x-1}
  \mu_x^{(1)}(y) = \frac{1}{r-1} \frac{\mu(\{\, e \in E : x, y \in
    e\,\})}{\mu(\{\, e \in E : x \in e\, \})}.
\end{equation}
Now use~\eqref{eq:mu-i} and~\eqref{eq:mu-x-1} to see that
\begin{equation}%
  \label{eq:mu-x-1-expr}
  \mu_x^{(1)}(y) = \frac{\mu^{(2)}(\{x, y\})}{2\mu^{(1)}(x)}
\end{equation}
for every~$x \in V$ and~$y \in V_x$.  Hence
\begin{equation}%
  \label{eq:TX}
  \lhof T_X f, g\rhof = \sum_{x \in V} \sum_{y \in V} f(y)
  \mu_x^{(1)}(y) g(x) \mu^{(1)}(x) = \sum_{\{x, y\} \in \binom{V}{2}} f(x) g(y)
  \mu^{(2)}(\{x, y\})
\end{equation}
for~$f$, $g \in \R^V$.  It follows at once that~$T_X$ is self-adjoint and
thus has real eigenvalues.

Note that~$T_X\one = \one$, hence the constant one vector is an eigenvector
of~$T_X$ with associated eigenvalue~$1$.  Moreover, the largest eigenvalue
of~$T_X$ is~$1$.  Indeed, recall that if~$A \in \R^{n \times n}$ is a
matrix and if~$\lambda$ is an eigenvalue of~$A$,
then~$|\lambda| \leq \|A\|_\infty = \max_{i \in [n]} \sum_{j=1}^n
|A_{ij}|$.  Since~$\|T_X\|_\infty = 1$ by construction, it follows that~$1$
is the largest eigenvalue of~$T_X$.

Let~$\lambda(X)$ be the smallest eigenvalue of~$T_X$, which is negative
since~$\trace T_X = 0$ as is clear from~\eqref{eq:TX}.  For~$i = 1$,
\dots,~$r-2$, let~$\lambda_i(X)$ be the minimum possible eigenvalue of the
normalized adjacency operator of any $i$-link of~$X$, that is,
\[
  \lambda_i(X) = \min_{\sigma \in X^{(i)}} \lambda(X_\sigma),
\]
and set $\lambda_0(X) = \lambda(X)$.

With this notation, the \textit{Hoffman bound} of~$X$ introduced by Filmus,
Golubev, and Lifshitz~\cite{FilmusGL2021} is
\[
  \hoff(X) = 1 - \frac{1}{(1-\lambda_0(X))(1 - \lambda_1(X)) \cdots (1 -
    \lambda_{r-2}(X))}.
\]

Say~$G = (V, E)$ is a $d$-regular graph and introduce on its edges the
uniform probability measure, obtaining a weighted $2$-uniform
hypergraph~$X = (V, \mu)$.  In this case,~$\mu^{(1)}$ is the uniform
probability measure on~$V$ and the normalized adjacency operator~$T_X$ is
simply the adjacency matrix of~$G$ divided by~$d$.  If~$\lambda$ is the
smallest eigenvalue of the adjacency matrix of~$G$, then~$\lambda(X) =
\lambda / d$ and the high-dimensional Hoffman bound reads
\[
  \hoff(X) = 1 - \frac{1}{1-\lambda_0(X)} = \frac{-\lambda}{d-\lambda},
\]
which is, up to normalization, the Hoffman bound for~$\alpha(G)$.

Filmus, Golubev, and Lifshitz showed that~$\alpha(X) \leq \hoff(X)$ and
that this bound does not change when one takes tensor powers of the
hypergraph, a fact that has implications for some problems in extremal
combinatorics.  The next theorem relates the hypergraph theta number to the
high-dimensional Hoffman bound.

\begin{theorem}%
  \label{thm:hoff-theta}
  If~$X = (V, \mu)$ is a weighted $r$-uniform hypergraph for
  some~$r \geq 2$ and if~$H$ is its underlying hypergraph,
  then~$\alpha(X) \leq \vartheta(H, \mu^{(1)}) \leq \hoff(X)$.
\end{theorem}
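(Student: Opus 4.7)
The bound $\alpha(X) \leq \vartheta(H, \mu^{(1)})$ is immediate from the definition $\alpha(X) = \alpha(H, \mu^{(1)})$ together with Corollary~\ref{cor:theta-chi}. For the interesting half $\vartheta(H, \mu^{(1)}) \leq \hoff(X)$, the plan is to induct on $r$; the base case can be taken either as the vacuous $r = 1$ or as $r = 2$ via Lov\'asz's classical bound, whichever is cleaner.

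For the inductive step, fix $F \in \lthb(H)$ and set $f = \diag F$. Since $F_x[V_x] \in F(x,x)\, \thb(H_x)$ by definition of $\lthb(H)$, applying the induction hypothesis to each link yields
\[
  \sum_{y \in V} \mu_x^{(1)}(y)\, F(x, y)\ \leq\ F(x,x)\, \hoff(X_x)\ \leq\ h\, F(x,x),
\]
where $h := \max_{x \in V} \hoff(X_x)$. Averaging over $x$ with weights $\mu^{(1)}(x)$ and using the identity $\mu^{(1)}(x)\mu_x^{(1)}(y) = \mu^{(2)}(\{x,y\})/2$ from~\eqref{eq:mu-x-1-expr}, I obtain $\langle D T_X, F\rangle \leq h\, \mu^{(1)\tp} f$, where $D = \diag(\mu^{(1)})$ and $DT_X$ is the symmetric matrix with off-diagonal entries $\mu^{(2)}(\{x,y\})/2$.

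I then bring in the positive semidefiniteness of $\borderM(F)$: a Schur-complement calculation gives $F \succeq ff^\tp$. Set $M := D(T_X - \lambda(X)\, I)$, which is (Frobenius-)positive semidefinite because $T_X$ is self-adjoint in the $\mu^{(1)}$-inner product with smallest eigenvalue $\lambda(X)$. Then $\langle M, F\rangle \geq f^\tp M f$. Decomposing $f = c \one + u$ with $c = \mu^{(1)\tp} f$ and $u$ orthogonal to $\one$ in the $\mu^{(1)}$-inner product, and using $T_X \one = \one$, the right-hand side simplifies to $f^\tp M f \geq c^2(1-\lambda(X))$. On the other hand $\langle M, F\rangle = \langle DT_X, F\rangle - \lambda(X)\mu^{(1)\tp} f \leq (h - \lambda(X))\, c$, and combining the two inequalities gives $c \leq (h - \lambda(X))/(1 - \lambda(X))$.

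To conclude I need $(h - \lambda(X))/(1-\lambda(X)) \leq \hoff(X)$. The key observation is that each $j$-link of $X_x$ is also a $(j+1)$-link of $X$ (namely one containing the vertex $x$), so $\lambda_j(X_x) \geq \lambda_{j+1}(X)$ for every $j \in \{0, \ldots, r-3\}$; this implies $\hoff(X_x) \leq 1 - 1/p$ with $p := \prod_{i=1}^{r-2}(1 - \lambda_i(X))$, so $h \leq (p-1)/p$, and a short algebraic manipulation gives $(h-\lambda(X))/(1-\lambda(X)) \leq (p-1-p\lambda(X))/(p(1-\lambda(X))) = \hoff(X)$, closing the induction. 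I expect the main technical hurdle to be spotting the correct positive-semidefinite witness $M$ that couples the spectral data of $T_X$ with the Schur inequality $F \succeq ff^\tp$; once $M$ is in hand, the rest of the argument parallels Lov\'asz's classical derivation of the Hoffman bound from the theta number.
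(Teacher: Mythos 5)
Your proof is correct, and it is in essence the paper's argument reorganized into a tighter package. The shared ingredients are: induction on $r$ with base $r = 2$ (Lovász); the Schur complement giving that $F - ff^\tp$ is positive semidefinite; the link constraints together with the induction hypothesis to bound $\sum_{\{x,y\}} F(x,y)\mu^{(2)}(\{x,y\})$ by $h\,(\mu^{(1)})^\tp f$ with $h = \max_{x \in V} \hoff(X_x)$; and the monotonicity $\lambda_j(X_x) \geq \lambda_{j+1}(X)$ to absorb $h$ into $\hoff(X)$. Where you diverge is the spectral middle step: the paper expands $F$ in an orthonormal eigenbasis $\{g_i\}$ with respect to the $\mu^{(1)}$-inner product, further expands each $g_i$ in the $T_X$-eigenbasis, and sums the term-by-term bound $\lhof T_X g_i, g_i\rhof \geq (1-\lambda(X))\lhof g_i, \one\rhof^2 + \lambda(X)$ against the weights $\lambda_i$. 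You instead encode the spectral gap of $T_X$ once and for all as the positive semidefinite witness $M = D(T_X - \lambda(X) I)$ and apply the single Frobenius inequality $\langle M, F\rangle \geq f^\tp M f$; this avoids diagonalizing $F$ and makes it transparent that exactly two positive-semidefiniteness facts drive the bound. The two routes are mathematically equivalent and equally general; yours is arguably the slicker write-up. One small hygiene point: you should take $F$ optimal so that $c = \vartheta(H,\mu^{(1)})$, and dispose of the degenerate case $c = 0$ before dividing by $c$.
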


A few remarks before the proof.  The theta number is a bound for the
weighted independence number, where the weights are placed on the vertices.
The Hoffman bound on the other hand is defined for an edge-weighted
hypergraph, and since edge weights naturally induce vertex weights, it is
possible to compare it to the theta number.  However, not every weight
function on vertices can be derived from a weight function on edges, so in
this sense the theta number applies in more general circumstances.

Moreover, even when a vertex-weight function~$w\colon V \to \Rplus$ can be
derived from an edge-weight function, it is not clear how to efficiently
find an edge-weight function that gives~$w$ and for which the Hoffman bound
gives a good upper bound for~$\alpha(H, w)$.  A natural idea is to compute
the optimal Hoffman bound, that is, to find the edge weights inducing~$w$
for which the corresponding Hoffman bound is the smallest possible.  This
was proposed by Filmus, Golubev, and Lifshitz~\cite[\S4.3]{FilmusGL2021},
but for~$r \geq 3$ the resulting optimization problem has a nonconvex
objective function, and it is not clear how to solve it efficiently.  In
contrast, one can always efficiently compute the theta number of a
hypergraph (see Theorem~\ref{thm:polytime}), and
Theorem~\ref{thm:hoff-theta} says that the bound so obtained will always be
at least as good as the optimal Hoffman bound.

Finally, an important property of the extension of the Hoffman bound is
that it is invariant under the tensor power operation, while it is unclear
whether the hypergraph theta number behaves nicely under natural hypergraph
products.

\begin{proof}[Proof of Theorem~\ref{thm:hoff-theta}]
By definition we have~$\alpha(X) = \alpha(H, \mu^{(1)})$ which, by
Corollary~\ref{cor:theta-chi}, is at most~$\vartheta(H, \mu^{(1)})$.

The proof of the inequality~$\vartheta(H, \mu^{(1)}) \leq \hoff(X)$
proceeds by induction on~$r$.  The base is~$r = 2$, in which case the
statement was shown by Lovász~\cite[Theorem~9]{Lovasz1979}.  (Note that the
Hoffman bound is not defined for~$r = 1$, which is why we take~$r = 2$ as
the base.)

So assume~$r \geq 3$.  Let~$f \in \thb(H)$ be such
that~$(\mu^{(1)})^\tp f = \vartheta(H, \mu^{(1)})$ and let~$F \in \lthb(H)$
be a matrix such that~$f = \diag F$.  Since~$\borderM(F)$ is positive
semidefinite, by taking the Schur complement we see that~$F - f f^\tp$ is
also positive semidefinite, and so
\[
  \sum_{x, y \in V} F(x, y) \mu^{(1)}(x) \mu^{(1)}(y) \geq ((\mu^{(1)})^\tp
  f)^2 = \vartheta(H, \mu^{(1)})^2.
\]
To finish the proof it then suffices to show that
\begin{equation}%
  \label{eq:hoff-goal}
  \sum_{x, y \in V} F(x, y) \mu^{(1)}(x) \mu^{(1)}(y) \leq \vartheta(H, \mu^{(1)})
  \hoff(X).
\end{equation}

Since~$F \in \lthb(H)$, we have $F_x[V_x] \in F(x, x) \thb(H_x)$ for
every~$x \in V$, and so
\[
  \sum_{y \in V_x} F(x, y) \mu_x^{(1)}(y) \leq F(x, x) \vartheta(H_x,
  \mu_x^{(1)}).
\]
By induction, $\vartheta(H_x, \mu_x^{(1)}) \leq \hoff(X_x)$, hence
\[
  \sum_{x \in V} \mu^{(1)}(x) \sum_{y \in V_x} F(x, y) \mu_x^{(1)}(y) \leq
  \sum_{x \in V} \mu^{(1)}(x) F(x, x) \vartheta(H_x, \mu_x^{(1)}) \leq
  \vartheta(H, \mu^{(1)}) M,
\]
where~$M = \max_{x \in V} \hoff(X_x)$.  Use~\eqref{eq:mu-x-1-expr} on the
left-hand side above to get
\begin{equation}%
  \label{eq:double-sum-M}
  \sum_{\{x,y\} \in \binom{V}{2}} F(x, y) \mu^{(2)}(\{x, y\}) \leq \vartheta(H,
  \mu^{(1)}) M,
\end{equation}
which already looks much closer to~\eqref{eq:hoff-goal}.

We work henceforth on the space~$\R^V$ equipped with the nondegenerate inner
product~$\lhof \cdot,\cdot\rhof$ defined above.  Since~$F$ is positive
semidefinite, let~$g_1$, \dots,~$g_n$ be an orthonormal basis of
eigenvectors of~$F$, with associated nonnegative eigenvalues~$\lambda_1$,
\dots,~$\lambda_n$.  We then
have~$F(x, y) = \sum_{i=1}^n \lambda_i g_i(x) g_i(y)$ and
\begin{gather}%
  \label{eq:hoff-trace}
  \sum_{i=1}^n \lambda_i = \sum_{i=1}^n \lambda_i \lhof g_i, g_i \rhof =
  \sum_{x\in V} F(x, x) \mu^{(1)}(x) = \vartheta(H, \mu^{(1)}),\\
  \label{eq:hoff-double}
  \sum_{i=1}^n \lambda_i \lhof g_i, \one\rhof^2 = \sum_{x, y \in V} F(x, y)
  \mu^{(1)}(x) \mu^{(1)}(y),
\end{gather}
and, using~\eqref{eq:TX},
\begin{equation}%
  \label{eq:double-sum}
  \begin{split}
    \sum_{i=1}^n \lambda_i \lhof T_X g_i, g_i\rhof &= \sum_{i=1}^n
    \lambda_i \sum_{\{x,y\} \in \binom{V}{2}} g_i(x) g_i(y)
    \mu^{(2)}(\{x,y\})\\
    &= \sum_{\{x,y\} \in \binom{V}{2}} F(x, y) \mu^{(2)}(\{x,y\}).
  \end{split}
\end{equation}

Let~$\one = v_1$, $v_2$, \dots,~$v_n$ be an orthonormal basis of
eigenvectors of~$T_X$ with associated
eigenvalues~$1 = \alpha_1 \geq \alpha_2 \geq \cdots \geq \alpha_n =
\lambda(X)$.  For every~$i$ we have
\[
  1 = \lhof g_i, g_i \rhof = \sum_{j=1}^n \lhof g_i, v_j\rhof^2,
\]
whence~$\sum_{j=2}^n \lhof g_i, v_j\rhof^2 = 1 - \lhof g_i, \one\rhof^2$.
It follows that
\[
  \begin{split}
    \lhof T_X g_i, g_i\rhof &= \sum_{j=1}^n \alpha_j \lhof g_i, v_j\rhof^2\\
    &\geq \lhof g_i, \one\rhof^2 + \sum_{j=2}^n \alpha_n \lhof g_i,
    v_j\rhof^2\\
    &= (1 - \alpha_n) \lhof g_i, \one\rhof^2 + \alpha_n.
  \end{split}
\]

Combine this with~\eqref{eq:double-sum} and~\eqref{eq:double-sum-M} to get
\[
  \sum_{i=1}^n \lambda_i ((1 - \alpha_n)\lhof g_i, \one\rhof^2 + \alpha_n)
  \leq \sum_{\{x,y\} \in \binom{V}{2}} F(x, y) \mu^{(2)}(\{x,y\})\leq
  \vartheta(H, \mu^{(1)}) M.
\]
By~\eqref{eq:hoff-trace} this implies that
\[
  (1-\alpha_n)\sum_{i=1}^n \lambda_i \lhof g_i, \one\rhof^2 \leq
  \vartheta(H, \mu^{(1)}) (M - \alpha_n).
\]
Since~$\alpha_n < 0$ and hence~$1 - \alpha_n > 0$,
using~\eqref{eq:hoff-double} we finally get
\begin{equation}%
  \label{eq:hoff-end}
  \sum_{x, y \in V} F(x, y) \mu^{(1)}(x) \mu^{(1)}(y) = \sum_{i=1}^n
  \lambda_i \lhof g_i, \one\rhof^2
  \leq \vartheta(H, \mu^{(1)}) \frac{M - \alpha_n}{1 - \alpha_n}.
\end{equation}

We are now essentially done.  Indeed,~$\lambda_i(X_x) \geq
\lambda_{i+1}(X)$ for all~$x \in V$ and~$i = 0$, \dots,~$r-2$, hence
\[
  1 - \frac{1}{(1 -\lambda_0(X_x)) \cdots (1 - \lambda_{r-3}(X_x))}
  \leq
  1 - \frac{1}{(1 - \lambda_1(X)) \cdots (1 - \lambda_{r-2}(X))}.
\]
The left-hand side above is precisely~$\hoff(X_x)$ and the right-hand side
is equal to~$\lambda_0(X) + (1 - \lambda_0(X)) \hoff(X)$.  We conclude that
\[
  \frac{M - \lambda_0(X)}{1 - \lambda_0(X)} = \max_{x \in V}
  \frac{\hoff(X_x) - \lambda_0(X)}{1 - \lambda_0(X)} \leq \hoff(X).
\]
Since~$\alpha_n = \lambda_0(X)$ by definition, this combined
with~\eqref{eq:hoff-end} gives~\eqref{eq:hoff-goal}, as wished.
\end{proof}

We mentioned above that the Hoffman bound coincides with the theta number
when the graph is edge transitive.  More generally, if a weighted
hypergraph and all its lower-order links are vertex transitive, then the
Hoffman bound coincides with the theta number.  The proof of this assertion
is an adaptation of the proof of Theorem~\ref{thm:hoff-theta}: use the
results of~\S\ref{sec:symmetry} to take an invariant
matrix~$F \in \lthb(H)$ and check that since the hypergraph and all its
lower-order links are vertex transitive, every inequality in the proof is
tight.


\section{The antiblocker}%
\label{sec:antiblocker}

Recall the definition of antiblocker from~\S\ref{sec:antib-def}.  If~$G$ is
a graph, then the antiblocker of~$\thb(G)$ is~$\thb(\overline{G})$ (see
Grötschel, Lovász, and Schrijver~\cite[Chapter~9]{GrotschelLS1988}).  The
same does not hold for hypergraphs in general.  Consider, for instance, the
hypergraph~$H = ([r], \{[r]\})$ and notice
that~$f = \chi_{[r-1]} \in \ind(H)$
and~$g = \chi_{[r]} \in \ind(\overline{H})$.  So~$f \in \thb(H)$
and~$g \in \thb(\overline{H})$, but~$f^\tp g = r-1$.  Hence,
for~$r \geq 3$,~$\thb(\overline{H})$ is not the antiblocker of~$\thb(H)$.

It seems from this simple example that we are off by a factor of~$r-1$, so
is~$A(\thb(H)) = (r-1)^{-1} \thb(\overline{H})$?  The answer is again no,
and the smallest example is the hypergraph on~$\{1, \ldots,
5\}$ with edges~$\{1, 2, 5\}$, $\{1, 3, 4\}$, and~$\{2, 3, 4\}$.

To describe the antiblocker of the theta body, we start by defining an
alternative theta number inspired by the dual of the theta number for
graphs.  For a number~$\lambda$ and a symmetric matrix~$A$ with
diagonal~$a$, write
\[
  \borderM(\lambda, A) =
  \begin{pmatrix}
    \lambda&a^\tp\\a&A
  \end{pmatrix}.
\]
Given an $r$-uniform hypergraph~$H$ for~$r \geq 2$ and a weight
function~$w \in \Rplus^V$, denote by~$\btheta(H, w)$ both the
semidefinite program below and its optimal value:
\begin{equation}%
  \label{opt:thetabar}%
  \begin{optprob}
    \min&\lambda\\
    &\diag Z = w,\\
    &Z_x[\overline{V}_x] \in Z(x, x) \thb(\overline{H}_x)&\text{for all~$x
      \in V$},\\
    &\multicolumn{2}{l}{\text{$Z \in \R^{V \times V}$ and
        $\borderM(\lambda, Z)$ is positive semidefinite,}}
  \end{optprob}
\end{equation}
where~$\overline{V}_x$ is the vertex set of the link of~$x$
in~$\overline{H}$.

Now define
\[
  \bthb(H) = \{\, g \in \Rplus^V : \text{$w^\tp g \leq
    \btheta(H, w)$ for all~$w \in \Rplus^V$}\,\}.
\]
This is a nonempty, closed, and convex set of antiblocking type contained
in~$[0, 1]^V$.  Indeed, to see that~$\bthb(H) \subseteq [0, 1]^V$,
fix~$x \in V$ and let~$w = \chi_{\{x\}}$.  Then~$\lambda = 1$
and~$Z = \chi_{\{x\}} \chi_{\{x\}}^\tp$ is a feasible solution
of~$\btheta(H, w)$, whence~$\btheta(H, w) \leq 1$, implying
that~$g(x) = w^\tp g \leq 1$ for every~$g \in \bthb(H)$, as we wanted.

\begin{theorem}%
\label{thm:antiblocker}%
If~$H = (V, E)$ is an $r$-uniform hypergraph for~$r \geq 2$, then:
\begin{enumerate}
\item $\btheta(H, w) = \max\{\, w^\tp g : g \in \bthb(H)\,\}$
  for every~$w \in \Rplus^V$;

\item $\vartheta(H, l) \btheta(\overline{H}, w) \geq l^\tp w$
  for every~$l$, $w \in \Rplus^V$;

\item $A(\thb(H)) = \bthb(\overline{H})$.
\end{enumerate}
\end{theorem}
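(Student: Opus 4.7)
The plan is to handle (3), (1), (2) together, hinging on one observation: the SDP~\eqref{opt:thetabar} defining $\btheta(\overline{H}, w)$ is really just the SDP certifying membership of $w/\lambda$ in $\thb(H)$, with $\lambda$ as a free variable. Concretely, I would first prove the scaling identity
\[
  \btheta(\overline{H}, w) = \min\{\,\lambda \geq 0 : w \in \lambda \thb(H)\,\} \quad\text{for every } w \in \Rplus^V.
\]
The verification is a direct substitution: for $\lambda > 0$, setting $F = Z/\lambda$ gives $\borderM(\lambda, Z) = \lambda \borderM(F)$, so positive semidefiniteness transfers, and the link constraint $Z_x[V_x] \in Z(x,x) \thb(H_x)$ appearing in \eqref{opt:thetabar} rescales to $F_x[V_x] \in F(x,x) \thb(H_x)$, which is exactly the constraint defining $\lthb(H)$. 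Thus $(\lambda, Z)$ with $\diag Z = w$ is feasible for $\btheta(\overline{H}, w)$ if and only if $F = Z/\lambda \in \lthb(H)$ with $\diag F = w/\lambda$. The degenerate case $\lambda = 0$ is handled by noting that $\borderM(\lambda, Z) \succeq 0$ then forces $w = 0$.

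With the scaling identity in hand, (2) and (3) follow quickly. For (2), set $\lambda^\ast = \btheta(\overline{H}, w)$: the identity gives $w/\lambda^\ast \in \thb(H)$, so $l^\tp(w/\lambda^\ast) \leq \vartheta(H, l)$ by the definition of the theta number, which rearranges to $l^\tp w \leq \vartheta(H, l)\btheta(\overline{H}, w)$. For (3), the inclusion $\bthb(\overline{H}) \subseteq A(\thb(H))$ reduces to showing $\btheta(\overline{H}, f) \leq 1$ for every $f \in \thb(H)$: this is witnessed by the feasible pair $(\lambda, Z) = (1, F)$, where $F \in \lthb(H)$ certifies $f \in \thb(H)$. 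The reverse inclusion is exactly what the scaling identity provides: if $g \in A(\thb(H))$ and $w \in \Rplus^V$, then $(w/\lambda^\ast)^\tp g \leq 1$ gives $w^\tp g \leq \lambda^\ast = \btheta(\overline{H}, w)$, showing $g \in \bthb(\overline{H})$.

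Finally, (1) follows by combining (3) applied to $\overline{H}$ (which yields $\bthb(H) = A(\thb(\overline{H}))$) with the classical antiblocker gauge-support duality: for any closed convex set $K \subseteq \Rplus^V$ of antiblocking type containing the origin, one has $\min\{\lambda \geq 0 : w \in \lambda K\} = \max\{w^\tp g : g \in A(K)\}$ for every $w \in \Rplus^V$. Taking $K = \thb(\overline{H})$ (of antiblocking type by Theorem~\ref{thm:theta-antiblocking}) and invoking the scaling identity once more gives
\[
  \btheta(H, w) = \min\{\lambda \geq 0 : w \in \lambda \thb(\overline{H})\} = \max\{w^\tp g : g \in A(\thb(\overline{H}))\} = \max\{w^\tp g : g \in \bthb(H)\}.
\]

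I expect the main technical subtlety to lie in the scaling identity, specifically the verification that the link constraint behaves correctly under positive scaling of $F$; this rests on $\thb(H_x)$ being a fixed set in $\Rplus^{V_x}$, so scaling the row $F_x[V_x]$ and the prefactor $F(x,x)$ by the same positive constant leaves the containment invariant. Once the identity is in place, everything else reduces to straightforward applications of the antiblocker machinery developed in~\S\ref{sec:theta-properties}.
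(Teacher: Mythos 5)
Your proof is correct, and it takes a genuinely different route from the paper's. You organize everything around the single \emph{scaling identity} $\btheta(\overline{H}, w) = \min\{\lambda \geq 0 : w \in \lambda\thb(H)\}$, proved by the substitution $F = Z/\lambda$ together with $\borderM(\lambda, Z) = \lambda\,\borderM(F)$. From it, (2) is immediate, both inclusions of (3) come out by direct unwinding, and (1) follows by applying (3) to $\overline{H}$ and invoking the gauge--support identity $\min\{\lambda \geq 0 : w \in \lambda K\} = \max\{w^\tp g : g \in A(K)\}$, which for a compact convex $K$ of antiblocking type is a restatement of the bipolar fact $A(A(K))=K$ cited in \S\ref{sec:antib-def}. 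The paper goes the other way: it first establishes homogeneity, subadditivity, and monotonicity of $\btheta(H,\cdot)$; proves (1) by an infinite linear-programming duality argument via Theorem~\ref{thm:convex-dual} of the appendix; proves (2) with essentially the same scaling observation as yours; and finally derives (3) from Lehman's length--width inequality combined with (1), (2), and the direct verification that $f^\tp g \leq 1$. Your order of deduction (scaling identity $\Rightarrow$ (3) $\Rightarrow$ (1)) makes the antiblocking structure transparent, avoids both Theorem~\ref{thm:convex-dual} and Lehman's inequality, and renders the auxiliary properties (i)--(iii) in the paper's proof unnecessary, since each follows at once from the scaling identity together with $\thb(H)$ being convex and of antiblocking type. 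Two small points to tidy in a final writeup: the constraint you quote as ``appearing in \eqref{opt:thetabar}'' is that of the program after substituting $\overline{H}$ for $H$ (the complements then cancel); and the gauge--support step should cite that $\thb(\overline{H})$ is compact, convex, and of antiblocking type (Theorems~\ref{thm:theta-basic} and~\ref{thm:theta-antiblocking}) and that $\bthb(H)$ is compact, so the $\max$ in (1) is attained.
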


If~$G$ is a graph, then~$A(\thb(G)) = \thb(\overline{G})$, and
hence~$\thb(G) = \bthb(G)$.  For $r$-uniform hypergraphs with~$r \geq 3$,
this is no longer always the case.

\begin{proof}
The proof of~(1) will require the following facts:
\begin{enumerate}
\item[(i)] if~$w \in \Rplus^V$ and~$\alpha \geq 0$,
  then~$\btheta(H, \alpha w) = \alpha \btheta(H, w)$;
    
\item[(ii)] if~$w$, $w' \in \Rplus^V$, then
  $\btheta(H, w + w') \leq \btheta(H, w) + \btheta(H, w')$;

\item[(iii)] if~$w$, $w' \in \Rplus^V$ and~$w' \leq w$,
  then~$\btheta(H, w') \leq \btheta(H, w)$.
\end{enumerate}

To show~(i), note that if~$(\lambda, Z)$ is a feasible solution
of~$\btheta(H, w)$, then~$(\alpha \lambda, \alpha Z)$ is a feasible
solution of~$\btheta(H, \alpha w)$.  For~(ii), simply take feasible
solutions for~$w$ and~$w'$ and note that their sum is a feasible solution
for~$w + w'$.  For~(iii), we show that if~$w' \leq w$ differs from~$w$ in a
single entry~$x \in V$, then the inequality holds; by applying this result
repeatedly, we then get~(iii).

Indeed, fix~$x \in V$ and let~$(\lambda, Z)$ be an optimal solution
of~$\btheta(H, w)$.  If~$\bar{Z}$ is the Hadamard product of~$Z$
and~$(\one - \chi_{\{x\}})(\one - \chi_{\{x\}})^\tp$,
then~$(\lambda, \bar{Z})$ is a feasible solution of~$\btheta(H, \bar{w})$,
where~$\bar{w}(x) = 0$ and~$\bar{w}(y) = w(y)$ for~$y \neq x$.  By taking
convex combinations of~$(\lambda, \bar{Z})$ and~$(\lambda, Z)$, we then see
that~$\btheta(H, w') \leq \lambda = \btheta(H, w)$ for every~$w'$ such
that~$0 \leq w'(x) \leq w(x)$ and~$w'(y) = w(y)$ for~$y \neq x$.

Back to~(1),
suppose~$\max\{\, w^\tp g : g \in \bthb(H)\,\} < \btheta(H, w)$.
Since~$\bthb(H)$ is a compact set, Theorem~\ref{thm:convex-dual} from
Appendix~\ref{app:duality} gives us a
function~$y\colon \Rplus^V \to \Rplus$, of finite support, such that
\[
  \sum_{\bar{w} \in \Rplus^V} y(\bar{w}) \bar{w} \geq w\quad\text{and}\quad
  \sum_{\bar{w} \in \Rplus^V} y(\bar{w}) \btheta(H, \bar{w}) <
  \btheta(H, w),
\]
and together with~(i),~(ii), and~(iii) we get a contradiction.
  
To see~(2), fix~$l$, $w \in \Rplus^V$ and let~$(\lambda, Z)$ be an optimal
solution of~$\btheta(\overline{H}, w)$.  If $w = 0$, then the result is
immediate, so assume~$w \neq 0$ and
therefore~$\btheta(\overline{H}, w) > 0$.
Then~$\lambda^{-1} Z \in \lthb(H)$ so~$\lambda^{-1} w \in \thb(H)$ and
\[
  \vartheta(H, l) \geq l^\tp (\lambda^{-1} w) =
  \btheta(\overline{H}, w)^{-1} l^\tp w,
\]
proving~(2).

To finish, we prove that if~$f \in \thb(H)$
and~$g \in \bthb(\overline{H})$, then~$f^\tp g \leq 1$, as~(3) then follows
by using Lehman's length-width inequality\footnote{See Theorem~9.5 in
  Schrijver~\cite{Schrijver1986}, where the result is stated for polyhedra,
  but the same proof works for convex sets as well.} together with~(1)
and~(2).  So take~$f \in \thb(H)$ and~$g \in \bthb(\overline{H})$.
Let~$A \in \lthb(H)$ be such that~$\vartheta(H, g) = g^\tp a$,
where~$a = \diag A$.  Note that~$\lambda = 1$ and~$Z = A$ is a feasible
solution of~$\btheta(\overline{H}, a)$,
so~$\btheta(\overline{H}, a) \leq 1$.  Hence
\[
  f^\tp g \leq \vartheta(H, g) = g^\tp a \leq
  \btheta(\overline{H}, a) \leq 1,
\]
and we are done.
\end{proof}

The antiblocker offers another relaxation of the independent-set polytope:
we have the following analogue of Theorem~\ref{thm:theta-basic} and
Corollary~\ref{cor:theta-chi}.

\begin{theorem}
  If~$H$ is an $r$-uniform hypergraph for~$r \geq 2$, then
  \begin{equation}
    \label{eq:thetab-inclusion}
    (r-1)^{-1}\ind(H) \subseteq \bthb(H) \subseteq (r-1)^{-1} \clique(H)
  \end{equation}
  and~$(r-1)^{-1} \alpha(H, w) \leq \btheta(H, w) \leq \chi^*(\overline{H},
  w)$ for every~$w \in \Rplus^V$.
\end{theorem}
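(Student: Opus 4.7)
The plan is to establish the two numerical inequalities on $\btheta$ first, and then deduce the polytope inclusions as their LP-duality counterparts, via the defining condition $\bthb(H) = \{g \in \Rplus^V : w^\tp g \le \btheta(H, w)\text{ for all }w \in \Rplus^V\}$ and the standard LP descriptions of $\alpha(H, w)$ and $\chi^*(\overline{H}, w)$ over $\ind(H)$ and the clique polytope.

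For the lower bound $(r-1)^{-1}\alpha(H, w) \le \btheta(H, w)$, I would let $I$ be a maximum-weight independent set of $H$, so $w(I) = \alpha(H, w)$, and apply Theorem~\ref{thm:antiblocker}(2) with $l = \chi_I$ to get
\[
  \vartheta(\overline{H}, \chi_I)\,\btheta(H, w) \ge \chi_I^\tp w = \alpha(H, w).
\]
It then remains to show $\vartheta(\overline{H}, \chi_I) \le r-1$. Corollary~\ref{cor:theta-chi} applied to $\overline{H}$ gives $\vartheta(\overline{H}, \chi_I) \le (r-1)\chi^*(H, \chi_I)$, and $\chi^*(H, \chi_I) \le 1$ holds trivially since $I$ is itself an independent set of $H$, providing the single-term cover $\chi_I = 1\cdot \chi_I$.

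For the upper bound $\btheta(H, w) \le \chi^*(\overline{H}, w)$, the key observation is that every $g \in \bthb(H)$ satisfies the \emph{strong} clique inequality $g(C) \le 1$ for every clique $C$ of $H$ (stronger than the ordinary $g(C) \le r-1$). Indeed, by Theorem~\ref{thm:antiblocker}(3) we have $\bthb(H) = A(\thb(\overline{H}))$, and since a clique of $H$ is an independent set of $\overline{H}$, $\chi_C \in \ind(\overline{H}) \subseteq \thb(\overline{H})$ by Theorem~\ref{thm:theta-basic}; hence $g(C) = g^\tp \chi_C \le 1$. Given any cover $w = \sum_i \lambda_i \chi_{C_i}$ by characteristic vectors of cliques of $H$, the strong clique inequality gives $w^\tp g = \sum_i \lambda_i g(C_i) \le \sum_i \lambda_i$. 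Minimizing over covers and maximizing over $g \in \bthb(H)$ (via Theorem~\ref{thm:antiblocker}(1)) yields $\btheta(H, w) \le \chi^*(\overline{H}, w)$.

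The polytope inclusions now follow. For $(r-1)^{-1}\ind(H) \subseteq \bthb(H)$: for any independent $I$ of $H$ and every $w \in \Rplus^V$, $w^\tp(\chi_I/(r-1)) = w(I)/(r-1) \le \alpha(H, w)/(r-1) \le \btheta(H, w)$, so $\chi_I/(r-1) \in \bthb(H)$; convexity gives the inclusion. For $\bthb(H) \subseteq (r-1)^{-1}\clique(H)$: combine the strong clique inequality $g(C) \le 1$ (so that $(r-1)g(C) \le r-1$ for every clique $C$ of $H$) with the componentwise bound $\bthb(H) \subseteq [0, 1]^V$ noted just before Theorem~\ref{thm:antiblocker} to conclude that $(r-1)g$ satisfies all the defining inequalities of $\clique(H)$. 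The main technical work is in the two numerical bounds; once the strong clique inequality is in hand, the rest is LP-duality bookkeeping, with the one step requiring care being the match between the $[0,1]^V$ ambient condition of $\clique(H)$ and the scaling by $r-1$.
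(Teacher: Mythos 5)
Your plan reverses the paper's: you establish the scalar bounds first and deduce the polytope inclusions from them, whereas the paper obtains~\eqref{eq:thetab-inclusion} directly by taking antiblockers of the chain $\ind(\overline H) \subseteq \thb(\overline H) \subseteq \clique(\overline H)$, invoking Theorem~\ref{thm:antiblocker}(3) together with the identity $A(\ind(\overline H)) = (r-1)^{-1}\clique(H)$ attributed to Schrijver, and then reads off the scalar bounds. Your derivations of the two scalar bounds, of the strong clique inequality $g(C) \leq 1$ for $g \in \bthb(H)$, and of the lower inclusion $(r-1)^{-1}\ind(H) \subseteq \bthb(H)$ are correct and self-contained.

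There is, however, a gap in the step you yourself flag as ``requiring care.'' By~\eqref{eq:QIND}, $\clique(H)$ carries the box constraint $f \in [0,1]^V$, so $(r-1)g \in \clique(H)$ requires $g(x) \leq (r-1)^{-1}$ for every $x$. Neither of your tools supplies this: $\bthb(H) \subseteq [0,1]^V$ only gives $g(x) \leq 1$, and the strong clique inequality applied to the singleton clique $\{x\}$ gives the same. For $r \geq 3$ this gap does not appear closable, and the inclusion as written seems to fail: take $H = ([r], \{[r]\})$, so $\overline H$ has no edges and hence $\thb(\overline H) = [0,1]^{[r]}$; then by Theorem~\ref{thm:antiblocker}(3), $\bthb(H) = A([0,1]^{[r]}) = \{\, g \in \Rplus^{[r]} : g([r]) \leq 1\,\}$, which contains $\chi_{\{1\}}$, while $(r-1)^{-1}\clique(H) \subseteq [0, (r-1)^{-1}]^{[r]}$ does not. (The paper's own proof runs into the same issue, since $A(\ind(H))$ need not lie in $[0,(r-1)^{-1}]^V$ when $r \geq 3$.) What your argument \emph{does} correctly establish is the inclusion $\bthb(H) \subseteq \{\, g \in \Rplus^V : g(C) \leq 1 \text{ for every clique } C \text{ of } H\,\}$, i.e., the clique polytope without the extraneous box constraint, and that is the version that follows from the strong clique inequality.
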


\begin{proof}
The antiblocker of~$\ind(H)$ is~$(r-1)^{-1} \clique(\overline{H})$ (see
Theorem~9.4 in Schrijver~\cite{Schrijver1986}).  Since
also~$A(\alpha K) = \alpha^{-1}A(K)$ for every convex set of antiblocking
type~$K$ and~$\alpha > 0$, we get~\eqref{eq:thetab-inclusion} directly from
Theorems~\ref{thm:theta-basic} and~\ref{thm:antiblocker}.

It follows that~$(r-1)^{-1}\alpha(H, w) \leq \btheta(H, w)$.  The proof of
$\btheta(H, w) \leq \chi^*(\overline{H}, w)$ is a straightforward
modification of the proof of Corollary~\ref{cor:theta-chi}.
\end{proof}


\section{Exploiting symmetry}%
\label{sec:symmetry}

When a hypergraph is highly symmetric, the optimization problem over the
theta body or its lifted counterpart can be significantly simplified.  We
enter the realm of invariant semidefinite programs, a topic which has been
thoroughly explored in the last decade~\cite{BachocGSV2012}.  In this
section, we discuss the aspects of the general theory that are most
relevant to our applications.

Let~$V$ be a finite set and let~$\Gamma$ be a finite group that acts
on~$V$.  The action of~$\Gamma$ extends naturally to a function~$f \in
\R^V$: given~$\sigma \in \Gamma$ we define
\[
  (\sigma f)(x) = f(\sigma^{-1}x).
\]
Similarly, the action extends to matrices~$A \in \R^{V \times V}$ by
setting
\[
  (\sigma A)(x, y) = A(\sigma^{-1} x, \sigma^{-1} y)
\]
for every~$\sigma \in \Gamma$.  We say that~$f\in \R^V$ is
\defi{$\Gamma$-invariant} if~$\sigma f = f$ for all~$\sigma \in \Gamma$.
We define $\Gamma$-invariant matrices likewise.

An \defi{automorphism}~$\sigma$ of a hypergraph~$H = (V, E)$ is a
permutation of~$V$ that preserves edges: if~$e \in E$,
then~$\sigma e \in E$.  The set of all automorphisms forms a group under
function composition, called the \defi{automorphism group} of~$H$ and
denoted by~$\aut(H)$.

Let~$H = (V, E)$ be an $r$-uniform hypergraph for~$r \geq 2$.  Consider
first the optimization problem over $\lthb(H)$: given~$w \in \Rplus^V$, we
want to find
\begin{equation}
  \label{opt:theta}
  \max\{\, w^\tp f : \text{$F \in \lthb(H)$ and~$f = \diag F$}\,\}.
\end{equation}
If~$\Gamma \subseteq \aut(H)$ is a group and~$w$ is $\Gamma$-invariant,
then when solving the optimization problem above we may restrict ourselves
to $\Gamma$-invariant matrices~$F$.

Indeed, for~$x \in V$, let~$H_x = (V_x, E_x)$ be the link of~$x$.
Since~$\Gamma \subseteq \aut(H)$, for every~$x \in V$ and
every~$\sigma \in \Gamma$ we have that~$V_{\sigma x} = \sigma V_x$
and~$E_{\sigma x} = \sigma E_x$, hence~$H_{\sigma x} = \sigma H_x$.  It
follows that $\thb(H_{\sigma x}) = \sigma \thb(H_x)$, where the action
of~$\sigma$ maps the function~$f\colon V_x \to \R$ to the
function~$\sigma f\colon V_{\sigma x} \to \R$
by~$(\sigma f)(\sigma y) = f(y)$ for~$y \in V_x$.

This implies that, if~$F \in \lthb(H)$, then~$\sigma F \in \lthb(H)$ for
every~$\sigma \in \Gamma$.  Since~$w$ is invariant, the objective values
of~$F$ and~$\sigma F$ coincide for every~$\sigma \in \Gamma$.  Use the
convexity of $\lthb(H)$ to conclude that, if~$F \in \lthb(H)$, then
\[
  \bar{F} = \frac{1}{|\Gamma|} \sum_{\sigma \in \Gamma} \sigma F
\]
also belongs to~$\lthb(H)$.  Now~$\bar{F}$ is $\Gamma$-invariant and has
the same objective value as~$F$, hence when solving~\eqref{opt:theta} we can
restrict ourselves to $\Gamma$-invariant matrices.

If~$\Gamma$ is a large group, this restriction allows us to
simplify~\eqref{opt:theta} considerably using standard
techniques~\cite{BachocGSV2012}.  The case when~$\Gamma$ acts transitively
on~$V$ is of particular interest to us.

\begin{theorem}%
\label{thm:transitive-theta}
If~$H = (V, E)$ is an $r$-uniform hypergraph for~$r \geq 2$ and
if~$\Gamma \subseteq \aut(H)$ acts transitively on~$V$, then the optimal
value of~\eqref{opt:theta} for~$w = \one$ is equal to the optimal value
of the problem
\begin{equation}
  \label{opt:theta-alt}
  \begin{optprob}
    \max&|V|^{-1} \langle J, A\rangle\\
    &A(x_0, x_0) = 1,\\
    &A_{x_0}[V_{x_0}] \in \thb(H_{x_0}),\\
    &\multicolumn{2}{l}{\text{$A \in \R^{V \times V}$ is positive
        semidefinite and $\Gamma$-invariant,}}
  \end{optprob}
\end{equation}
where~$x_0 \in V$ is any fixed vertex and $J$ is the all-ones matrix.
\end{theorem}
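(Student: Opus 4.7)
The plan is to apply the symmetrization argument from earlier in this section to (\ref{opt:theta}) and then exhibit, under transitivity, a correspondence between $\Gamma$-invariant feasible matrices of the two programs that preserves objective values. After restricting (\ref{opt:theta}) with $w = \one$ to $\Gamma$-invariant $F \in \lthb(H)$ by symmetrization, transitivity forces $\diag F$ to be constant: $F(x, x) = c$ for all $x \in V$, so $\trace F = c|V|$. Similarly, any $\Gamma$-invariant $A$ feasible for (\ref{opt:theta-alt}) satisfies $A(x, x) = 1$ for all $x$, and the single constraint $A_{x_0}[V_{x_0}] \in \thb(H_{x_0})$ propagates to $A_x[V_x] \in \thb(H_x)$ for every $x \in V$ via the identity $\thb(H_{\sigma x_0}) = \sigma \thb(H_{x_0})$ established earlier.

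The main obstacle is bridging the differing objective functions ($\trace F$ versus $|V|^{-1}\langle J, A\rangle$) and the differing semidefinite constraints ($\borderM(F) \succeq 0$ versus merely $A \succeq 0$). Both issues are resolved by a single observation: if $A \in \R^{V\times V}$ is $\Gamma$-invariant, then $A\one$ is $\Gamma$-invariant and hence, by transitivity, a scalar multiple of $\one$; so $\one$ is an eigenvector of $A$, with eigenvalue $\beta = |V|^{-1}\langle J, A\rangle$. Consequently, if $A \succeq 0$ and $\alpha = \beta/|V|$, then $A - \alpha J \succeq 0$, since $\one$ lies in the kernel of $A - \alpha J$ while on the invariant complement $\one^\perp$ the matrix $\alpha J$ vanishes and $A$ is already PSD.

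With this in hand, both inequalities are immediate. To show (\ref{opt:theta-alt}) $\leq$ (\ref{opt:theta}), take $\Gamma$-invariant $A$ optimal for (\ref{opt:theta-alt}) and set $F = \alpha A$ with $\alpha = |V|^{-2}\langle J, A\rangle$; the link conditions rescale correctly, and the Schur complement applied to $\borderM(F) \succeq 0$ reduces to the condition $A - \alpha J \succeq 0$, which holds by the key observation. Hence $F \in \lthb(H)$ with $\trace F = \alpha|V| = |V|^{-1}\langle J, A\rangle$. For the reverse direction, note that $|V|^{-1} I$ is $\Gamma$-invariant and feasible for (\ref{opt:theta}) with positive trace, so an optimal $\Gamma$-invariant $F$ has $c > 0$; take $A = c^{-1} F$, which is $\Gamma$-invariant and feasible for (\ref{opt:theta-alt}). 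The Schur complement on $\borderM(F) \succeq 0$ yields $F - c^2 J \succeq 0$; pairing with $J$ (a PSD matrix) gives $\langle J, F\rangle \geq c^2|V|^2$, so $|V|^{-1}\langle J, A\rangle = c^{-1}|V|^{-1}\langle J, F\rangle \geq c|V| = \trace F$, completing the proof.
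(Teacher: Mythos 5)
Your proof is correct and follows essentially the same route as the paper's: symmetrize to $\Gamma$-invariant matrices, use transitivity to force the diagonal to be constant and $\one$ to be an eigenvector, and exchange $F \leftrightarrow A$ by the scalings $A = c^{-1}F$ and $F = \alpha A$ with the Schur complement of $\borderM(F)$ (equivalently, $A - \alpha J \succeq 0$) supplying the positive-semidefiniteness in the nontrivial direction. Your added remark that an optimal invariant $F$ has $c>0$, witnessed by $|V|^{-1}I$, tidily closes a division that the paper leaves implicit.
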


\begin{proof}
Note that~$w = \one$ is $\Gamma$-invariant, so when
solving~\eqref{opt:theta} we can restrict ourselves to $\Gamma$-invariant
matrices.  Let~$F$ be a $\Gamma$-invariant feasible solution
of~\eqref{opt:theta} and set $A = |V| (\one^\tp f)^{-1} F$,
where~$f = \diag F$.

Since~$\Gamma$ acts transitively, all diagonal entries of~$F$ are equal,
hence~$A$ is a feasible solution of~\eqref{opt:theta-alt}.
Now~$\borderM(F)$ is positive semidefinite, and hence the Schur
complement~$F - f f^\tp$ is also positive semidefinite.  So
\[
  |V|^{-1} \langle J, A\rangle = (\one^\tp f)^{-1} \langle J, F\rangle \geq
  (\one^\tp f)^{-1} \langle J, f f^\tp\rangle = \one^\tp f,
\]
and we see that the optimal value of~\eqref{opt:theta-alt} is at least that
of~\eqref{opt:theta}.

For the reverse inequality, let~$A$ be a feasible solution
of~\eqref{opt:theta-alt}.  Since the action of~$\Gamma$ is transitive, we
immediately get that~$A(x, x) = 1$ for all~$x \in V$; it is a little more
involved, though mechanical, to verify that~$A_x[V_x] \in \thb(H_x)$ for
all~$x \in V$.

So set~$F = |V|^{-2} \langle J, A\rangle A$ and~$f = \diag F$; note that
$f = |V|^{-2} \langle J, A\rangle \one$.  Since
$\one^\tp f = |V|^{-1} \langle J, A\rangle$, if we show that~$F$ is a
feasible solution of~\eqref{opt:theta}, then we are done, and to show
that~$F$ is feasible for~\eqref{opt:theta} it suffices to show
that~$\borderM(F)$ is positive semidefinite.

This in turn can be achieved by showing that the Schur
complement~$F - f f^\tp$ is positive semidefinite.  Indeed, note that
since~$A$ is $\Gamma$-invariant, the constant vector~$\one$ is an
eigenvector of~$A$ with eigenvalue~$|V|^{-1} \langle J, A\rangle$.
Hence~$\one$ is an eigenvector of both~$F$ and~$f f^\tp$ with the same
eigenvalue; since all other eigenvalues of $f f^\tp$ are zero and $F$ is
positive semidefinite, we are done.
\end{proof}

Symmetry also simplifies testing whether a given vector is in the theta
body.

\begin{theorem}%
  \label{thm:invariant-external}
  Let~$H = (V, E)$ be an $r$-uniform hypergraph with~$r \geq 2$ and
  let~$\Gamma \subseteq \aut(H)$ be a group.  A $\Gamma$-invariant
  vector~$f \in \R^V$ is in~$\thb(H)$ if and only if~$f \geq 0$ and~$w^\tp
  f \leq \vartheta(H, w)$ for every $\Gamma$-invariant~$w \in \Rplus^V$.
\end{theorem}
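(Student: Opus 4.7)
The plan is to reduce the claim to Theorem~\ref{thm:theta-antiblocking} by an averaging argument. The ``only if'' direction is immediate: if $f \in \thb(H)$, then $f \geq 0$ and $w^\tp f \leq \vartheta(H, w)$ for every $w \in \Rplus^V$ by Theorem~\ref{thm:theta-antiblocking}, and in particular this holds for every $\Gamma$-invariant $w$.

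For the ``if'' direction, suppose $f \geq 0$ is $\Gamma$-invariant and satisfies $w^\tp f \leq \vartheta(H, w)$ for every $\Gamma$-invariant $w \in \Rplus^V$. By Theorem~\ref{thm:theta-antiblocking} again, it suffices to show that $w^\tp f \leq \vartheta(H, w)$ for \emph{every} $w \in \Rplus^V$. Given such $w$, I would consider its $\Gamma$-average
\[
  \bar{w} = \frac{1}{|\Gamma|} \sum_{\sigma \in \Gamma} \sigma w,
\]
which is $\Gamma$-invariant and nonnegative.

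Two facts then do the work. First, since $f$ is $\Gamma$-invariant, a change of variables $y = \sigma^{-1} x$ in the sum defining $(\sigma w)^\tp f$ yields $(\sigma w)^\tp f = w^\tp f$ for every $\sigma \in \Gamma$, so $\bar{w}^\tp f = w^\tp f$. Second, since $\Gamma \subseteq \aut(H)$ and $\thb(H)$ is defined entirely in hypergraph terms, the theta body is $\Gamma$-invariant, so the substitution $g \mapsto \sigma g$ in the optimization defining $\vartheta(H, \sigma w)$ shows $\vartheta(H, \sigma w) = \vartheta(H, w)$ for every $\sigma$. Combining this with the convexity of $\vartheta(H, \cdot)$ in its weight argument (it is a pointwise maximum of linear functions on $\Rplus^V$), I get
\[
  \vartheta(H, \bar{w}) \leq \frac{1}{|\Gamma|} \sum_{\sigma \in \Gamma}
  \vartheta(H, \sigma w) = \vartheta(H, w).
\]

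Chaining these two observations with the hypothesis applied to the $\Gamma$-invariant weight $\bar{w}$ gives
\[
  w^\tp f = \bar{w}^\tp f \leq \vartheta(H, \bar{w}) \leq \vartheta(H, w),
\]
and the proof is complete. No step here looks like a real obstacle; the only thing to double-check is that $\thb(H)$ is genuinely preserved by the $\Gamma$-action, which follows from the fact that automorphisms permute links in the sense $H_{\sigma x} = \sigma H_x$ already observed in the discussion preceding Theorem~\ref{thm:transitive-theta}, and that the defining conditions of $\lthb(H)$ are equivariant under this action.
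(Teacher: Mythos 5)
Your proof is correct, and it follows the same overall route as the paper's: both directions reduce to Theorem~\ref{thm:theta-antiblocking}, and the heart of the ``if'' direction is the replacement of an arbitrary $w \in \Rplus^V$ by its $\Gamma$-average $\bar{w}$, using $\Gamma$-invariance of $f$ to get $w^\tp f = \bar{w}^\tp f$ and then showing $\vartheta(H, \bar{w}) \leq \vartheta(H, w)$. The only point where you diverge is in the argument for that last inequality. You observe that $\vartheta(H, \sigma w) = \vartheta(H, w)$ because $\thb(H)$ is $\Gamma$-invariant, and then use convexity of $\vartheta(H, \cdot)$ (as a pointwise maximum of linear functions on $\Rplus^V$) to bound $\vartheta(H, \bar{w})$ by the average of $\vartheta(H, \sigma w)$. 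The paper instead picks a $\Gamma$-invariant optimal solution $g \in \thb(H)$ for $\bar{w}$ (which exists by the averaging discussion in \S\ref{sec:symmetry}) and notes that $w^\tp g = \bar{w}^\tp g = \vartheta(H, \bar{w})$, hence $\vartheta(H, w) \geq \vartheta(H, \bar{w})$. These two sub-arguments are of comparable length and difficulty; yours leans on convexity and equivariance of the feasible region, while the paper's leans on existence of an invariant optimizer. Either way, the proof is complete and correct.
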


\begin{proof}
Necessity being trivial from Theorem~\ref{thm:theta-antiblocking}, let us
prove sufficiency.  If~$w\in \Rplus^V$ is any weight function, then
since~$f$ is $\Gamma$-invariant we have that
\[
  w^\tp f = \frac{1}{|\Gamma|} \sum_{\sigma \in \Gamma} w^\tp (\sigma f) =
  \frac{1}{|\Gamma|} \sum_{\sigma \in \Gamma} (\sigma^{-1}w)^\tp f =
  \bar{w}^\tp f,
\]
where~$\bar{w} = |\Gamma|^{-1}\sum_{\sigma\in\Gamma} \sigma^{-1} w$.  Note
that~$\bar{w}$ is $\Gamma$-invariant.

We claim that~$\vartheta(H, \bar{w}) \leq \vartheta(H, w)$.  Indeed,
since~$\bar{w}$ is $\Gamma$-invariant, let~$g \in \thb(H)$ be a
$\Gamma$-invariant vector such that~$\bar{w}^\tp g = \vartheta(H,
\bar{w})$.  Then~$(\sigma w)^\tp g =  w^\tp (\sigma^{-1} g) = w^\tp g$ for
all~$\sigma \in \Gamma$, and so~$w^\tp g = \bar{w}^\tp g$,
hence~$\vartheta(H, w) \geq \bar{w}^\tp g$, proving the claim.

Now use to claim to get
$w^\tp f = \bar{w}^\tp f \leq \vartheta(H, \bar{w}) \leq \vartheta(H, w)$,
and with Theorem~\ref{thm:theta-antiblocking} we are done.
\end{proof}


\section{Triangle-encoding hypergraphs and Mantel's theorem}%
\label{sec:mantel}

In a~1910 issue of the journal \textit{Wiskundige Opgaven,} published by
the Royal Dutch Mathematical Society, Mantel~\cite{Mantel1910} asked what
perhaps turned out to be the first question of extremal graph theory; in
modern terminology: how many edges can a triangle-free graph on~$n$
vertices have?  The complete bipartite graph on~$n$ vertices with parts of
size~$\lfloor n/2\rfloor$ and~$\lceil n/2\rceil$ is triangle-free and has
$\lfloor n^2/4\rfloor$ edges.  Can we do better?

The answer came in the same issue, supplied by Mantel and several others:
\textit{a triangle-free graph on~$n$ vertices has at
  most~$\lfloor n^2/4\rfloor$ edges.}  Mantel's theorem was later
generalized by Turán to $K_r$-free graphs for~$r \geq 4$.

Given an integer~$n \geq 3$, we want to find the largest triangle-free
graph on~$[n] = \{1, \ldots, n\}$.  So we construct a $3$-uniform
hypergraph~$H_n = (V_n, E_n)$ as follows:
\begin{itemize}
\item the vertices of~$H_n$ are the edges of the complete graph~$K_n$
  on~$[n]$;
  
\item three vertices of~$H_n$, corresponding to three edges of~$K_n$,
  form an edge of~$H_n$ if they form a triangle in~$K_n$.
\end{itemize}
Independent sets of~$H_n$ thus correspond to triangle-free subgraphs
of~$K_n$, and the independent-set polytope of~$H_n$ coincides with the
Turán polytope studied by Raymond~\cite{Raymond2018}.  In order to
illustrate our methods we will compute the theta number~$\vartheta(H_n)$,
which provides an upper bound of~$n^2/4$ for the independence number
of~$H_n$.  This bound, rounded down, coincides with the lower bound
$\lfloor n^2/4\rfloor$ given by the complete bipartite graph, showing that
the theta number is essentially tight for this infinite family of
hypergraphs.  Incidentally, this gives another proof of Mantel's theorem,
though not a particularly short one.

The symmetric group~$\Scal_n$ on~$n$ elements acts on~$[n]$, and therefore
on~$V_n$, and this action preserves edges of~$H_n$, hence~$\Scal_n$ is a
subgroup of~$\aut(H_n)$.  The action of~$\Scal_n$ is also transitive, so we
set~$x_0 = \{1, 2\}$ and use Theorem~\ref{thm:transitive-theta} to get
\begin{equation}
  \label{opt:mantel-theta}
  \begin{optprob}
    \vartheta(H_n) = \max&|V_n|^{-1} \langle J, A\rangle\\
    &A(x_0, x_0) = 1,\\
    &A_{x_0}[(V_n)_{x_0}] \in \thb((H_n)_{x_0}),\\
    &\multicolumn{2}{l}{\text{$A \in \R^{V_n \times V_n}$ is positive
        semidefinite and $\Scal_n$-invariant.}}
  \end{optprob}
\end{equation}

The link of~$x_0 = \{1,2\}$ is the graph with vertex set
\[
  (V_n)_{x_0} = \{\, \{1, k\} : k \in \{3, \ldots, n\}\,\}
  \cup \{\, \{2, k\} : k \in \{3, \ldots, n\}\,\}
\]
and edge set
\[
  (E_n)_{x_0} = \{\,\{\{1, k\}, \{2,k\}\} : k \in \{3,
  \ldots, n\}\,\},
\]
that is, it is a matching with~$2(n-2)$ vertices and~$n-2$ edges (see
Figure~\ref{fig:mantel}).

\begin{figure}[t]
  \begin{center}
    \includegraphics{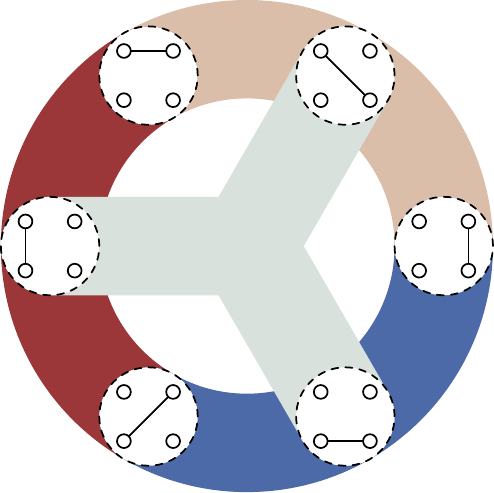}\hspace{1.5cm}
    \includegraphics{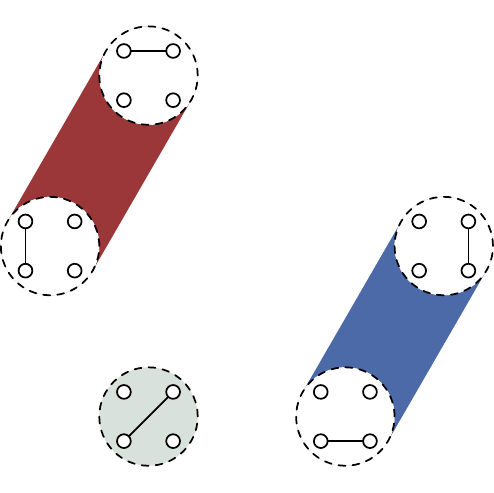}
  \end{center}

  \caption{On the left, the hypergraph~$H_4$ where each vertex is the edge
    of~$K_4$ shown.  On the right, the link
    of~\lower3pt\hbox{\includegraphics{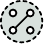}}, consisting of a
    matching with~4 vertices and~2 edges.}%
  \label{fig:mantel}
\end{figure}

The row~$A_{x_0}$ of an~$\Scal_n$-invariant
matrix~$A \in \R^{V_n \times V_n}$ is invariant under the stabilizer
of~$x_0$, and so~$A_{x_0}[V_{x_0}]$ is a constant function since the
stabilizer acts transitively on~$(V_n)_{x_0}$.
Theorem~\ref{thm:invariant-external} then implies
that~$A_{x_0}[(V_n)_{x_0}] \in \thb((H_n)_{x_0})$ if and only if
\begin{equation}%
  \label{eq:alt-mantel-link}
  0 \leq A(x_0, \{1, 3\}) \leq \frac{\vartheta((H_n)_{x_0})}{2(n-2)}  =
  \frac{1}{2},
\end{equation}
since~$n-2 \leq \alpha((H_n)_{x_0}) \leq \vartheta((H_n)_{x_0}) \leq
\chi(\overline{(H_n)_{x_0}}) \leq n-2$.

We simplify this problem further by computing a basis of the space of
$\Scal_n$-invariant symmetric matrices in~$\R^{V_n \times V_n}$.  The
action of~$\Scal_n$ on~$V_n$ extends naturally to an action
on~$V_n \times V_n$.  There are three orbits of~$V_n \times V_n$ under this
action, namely~$R_k = \{\, (x, y) : |x \cap y| = 2-k\, \}$ for~$k = 0$,
$1$, and~$2$.  So a basis of the invariant subspace is given by the
matrices~$A_k$ such that
\[
  A_k(x, y) = \begin{cases}
    1,&\text{if~$(x, y) \in R_k$;}\\
    0,&\text{otherwise.}
  \end{cases}
\]
Note that~$A_0$ is the identity matrix.

A feasible solution of~\eqref{opt:mantel-theta} is then of the form
\begin{equation}
  \label{eq:mantel-solution}
  A = A_0 + \alpha A_1 + \beta A_2
\end{equation}
for some real numbers~$\alpha$ and~$\beta$.  We see moreover
that~$A(x_0, \{1,3\}) = \alpha$, and so~\eqref{eq:alt-mantel-link}
becomes~$0 \leq \alpha \leq 1/2$.  The objective function is
\begin{equation}%
  \label{eq:mantel-obj}
  \begin{split}
    |V_n|^{-1} \langle J, A\rangle &= |V_n|^{-1} (\langle J, A_0\rangle +
    \alpha \langle J, A_1 \rangle + \beta \langle J, A_2\rangle)\\
    &= 1 + |V_n|^{-1} |R_1| \alpha + |V_n|^{-1} |R_2| \beta.
  \end{split}
\end{equation}

For the positive semidefiniteness constraint on~$A$, we observe
that~$\{A_0, A_1, A_2\}$ is the Johnson scheme~$\Jcal(n, 2)$ (see Godsil
and Meagher~\cite[Chapter~6]{GodsilM2016}).  The algebra spanned by the
scheme (its \defi{Bose-Mesner algebra}) is commutative, unital, and closed
under transposition; its matrices then share a common basis of
eigenvectors, say~$v_1$, $v_2$, and~$v_3$, and can therefore be
simultaneously diagonalized.  The eigenvalues of~$v_1$, $v_2$, and~$v_3$
for each matrix are (see Theorem~6.5.2, ibid.):
\[
  \begin{array}{ll}
    A_0\colon&1,\ 1,\ 1;\\
    A_1\colon&-2,\ n-4,\ 2n-4;\\
    A_2\colon&1,\ -(n-3),\ (n-2)(n-3)/2.
\end{array}
\]

Putting it all together, our original problem can be rewritten as
\[
  \begin{optprob}
    \max&1 + |V_n|^{-1} |R_1| \alpha + |V_n|^{-1} |R_2| \beta\\
    &0 \leq \alpha \leq 1/2,\\
    &1 -2\alpha + \beta \geq 0,\\
    &1 +(n-4)\alpha -(n-3)\beta \geq 0,\\
    &1 +(2n-4)\alpha + ((n-2)(n-3)/2)\beta \geq 0.
  \end{optprob}
\]
This is a linear program on two variables.  Using the dual, or finding all
vertices of the primal feasible region, it is easy to verify that one
optimal solution is
\[
  \alpha = 1/2\quad\text{and}\quad \beta = \frac{n-2}{2(n-3)}
\]
for all~$n \geq 4$.  This gives us an optimal value of~$n^2/4$, which
rounded down coincides with the lower bound coming from complete bipartite
graphs.


\section{Triangle-avoiding sets in the Hamming cube}%
\label{sec:hamming}

For an integer~$n \geq 1$, consider the Hamming cube~$\Ham^n = \{0,1\}^n$
equipped with the \defi{Hamming distance}, which for~$x$, $y \in \Ham^n$ is
denoted by~$d(x, y)$ and equals the number of bits in which~$x$ and~$y$
differ.  A classical problem in coding theory is to determine the
parameter~$A(n, d)$, which is the maximum size of a subset~$I$ of~$\Ham^n$
such that~$d(x, y) \geq d$ for all distinct~$x$, $y \in I$.

If we let~$G(n, d)$ be the graph with vertex set~$\Ham^n$ in which~$x$,
$y \in \Ham^n$ are adjacent if~$d(x, y) < d$,
then~$A(n, d) = \alpha(G(n, d))$.  A simple variant of the Lovász theta
number of~$G(n, d)$, obtained by requiring that~$F$
in~\eqref{eq:theta-graph} be nonnegative as well, then provides an upper
bound for~$A(n, d)$, which is easy to compute given the abundant symmetry
of~$G(n, d)$.  This bound, known as the linear programming bound, was
originally described by Delsarte~\cite{Delsarte1973}; its relation to the
theta number was later discovered by McEliece, Rodemich, and
Rumsey~\cite{McElieceRR1978} and Schrijver~\cite{Schrijver1979}.

We now consider a hypergraph analogue of this problem.  Let~$s \geq 1$ be
an integer.  Three distinct points~$x_1$, $x_2$, $x_3 \in \Ham^n$ form an
\defi{$s$-triangle} if~$d(x_i, x_j) = s$ for all~$i \neq j$.  It is easy to
show that there is an $s$-triangle in $\Ham^n$ if and only if~$s$ is even
and~$0 < s \leq \lfloor 2n/3\rfloor$.

We want to find the largest size of a set of points in~$\Ham^n$ that avoids
$s$-triangles.  More precisely, given integers~$n$, $s \geq 1$, we consider
the hypergraph~$H(n, s)$ whose vertex set is~$\Ham^n$ and whose edges are
all $s$-triangles and we want to find its independence number.  The theta
number~$\vartheta(H(n, s))$ defined in~\eqref{eq:theta-number} gives us an
upper bound.

To compute~$\vartheta(H(n, s))$, start by noting that~$\iso(\Ham^n)$, the
group of isometries of~$\Ham^n$, is a subgroup of the automorphism group
of~$H = H(n, s)$ and, since it acts transitively on~$\Ham^n$, we can use
Theorem~\ref{thm:transitive-theta} to simplify our problem.  To do so we
choose~$x_0 = 0$.

The vertex set of the link~$H_0$ of~$0$ is~$\Ham^n_s$, the set of all words
of weight~$s$, the \defi{weight} of a word being the number of~1s in it;
two words are adjacent in~$H_0$ if they are at distance~$s$.  The isometry
group~$\iso(\Ham^n_s)$ of~$\Ham^n_s$ is a subgroup of the automorphism
group of~$H_0$.

If~$A\colon \Ham^n \times \Ham^n \to \R$ is an $\iso(\Ham^n)$-invariant
symmetric matrix, then~$A(x, y)$ depends only on~$d(x, y)$, and
so~$a = A_0[V_0]$ is a constant function.  We write~$A(t)$ for the value
of~$A(x, y)$ when~$d(x, y) = t$.

By Theorem~\ref{thm:invariant-external}, we have~$a \in \thb(H_0)$ if and
only if~$a \geq 0$ and $w^\tp a \leq \vartheta(H_0, w)$ for every
$\iso(\Ham^n_s)$-invariant~$w \in \Rplus^{V_0}$.  Since $\iso(\Ham^n_s)$
acts transitively on~$\Ham^n_s$, every such invariant~$w$ is constant, and
we conclude that $A_0[V_0] \in \thb(H_0)$ if and only if
$0 \leq |\Ham^n_s| A(s) \leq \vartheta(H_0)$.

The problem can be further simplified.  A
matrix~$A\colon \Ham^n \times \Ham^n \to \R$ is $\iso(\Ham^n)$-invariant
and positive semidefinite if and only if there are numbers~$a_0$,
\dots,~$a_n \geq 0$ such that
\[
  A(t) = \sum_{k=0}^n a_k K^n_k(t),
\]
where~$K^n_k$ is the Krawtchouk polynomial of degree~$k$, normalized
so~$K^n_k(0) = 1$.  This polynomial can be defined on
integers~$t \in \{0, \ldots,n\}$ by the formula
\[
  K^n_k(t) = \binom{n}{k}^{-1} \sum_{i=0}^k (-1)^i \binom{t}{i}
  \binom{n-t}{k-i}.
\]
If~$E_k(x, y) = K^n_k(d(x, y))$, then we have the orthogonality relations
$\langle E_k, E_l\rangle = 0$ for~$k \neq l$; see~Dunkl~\cite{Dunkl1976}.

With this characterization, and noting that~$E_0 = J$ is the all-ones
matrix, we have
\[
\langle J, A\rangle = \langle J, a_0 E_0 \rangle = |\Ham^n|^2 a_0 = 2^{2n} a_0.
\]
Rewriting~\eqref{opt:theta-alt}, we see that~$\vartheta(H(n,s))$ is the
optimal value of the problem
\begin{equation}%
  \label{opt:ham-triangle}
  \begin{optprob}
    \max&2^n a_0\\
    &\sum_{k=0}^n a_k = 1,\\
    &\sum_{k=0}^n a_k K^n_k(s) \leq |\Ham^n_s|^{-1} \vartheta(H_0),\\
    &\text{$a_0$, \dots,~$a_n \geq 0$.}
  \end{optprob}
\end{equation}
Here, we have omitted the constraint~$0 \leq |\Ham_s^n| A(s)$, since it is
automatically satisfied by the optimal solution.

Problem~\eqref{opt:ham-triangle} has only two constraints, and so its
optimal solution admits a simple expression.
With~$M_K^n(s) = \min\{\, K^n_k(s) : \text{$k = 0$, \dots,~$n$}\,\}$
for~$s \geq 0$ we have:

\begin{theorem}%
  \label{thm:theta-expr}%
  If~$n \geq 1$ is an integer and~$0 < s \leq \lfloor 2n/3\rfloor$ is an
  even integer, then
  \begin{equation}%
    \label{eq:theta-expr}
    \vartheta(H(n, s)) = 2^n\frac{M_K^n(s) - |\Ham^n_s|^{-1}
      \vartheta(H(n, s)_0)}{M_K^n(s) - 1}.
  \end{equation}
\end{theorem}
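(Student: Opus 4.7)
The plan is to exploit the very simple structure of the linear program~\eqref{opt:ham-triangle}: only two nontrivial constraints bind the $n+1$ variables $a_0, \ldots, a_n$, namely the normalization $\sum_k a_k = 1$ and the Krawtchouk inequality $\sum_k a_k K^n_k(s) \leq C$, where $C := |\Ham^n_s|^{-1}\vartheta(H(n,s)_0)$, while the objective $2^n a_0$ depends only on $a_0$. A basic optimal solution therefore has support of size at most two. Since $K^n_0(s) = 1$ contributes fully to the inequality, making $a_0$ as large as possible calls for placing all remaining mass on an index $k^* \neq 0$ minimizing $K^n_k(s)$, so as to give the Krawtchouk constraint the most possible slack. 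Theorem~\ref{thm:theta-expr} then reduces to a short LP-duality computation.

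First I would record a few elementary facts. We have $K^n_0(s) = 1$ and $K^n_1(s) = 1 - 2s/n < 1$ since $s > 0$, whence $M := M_K^n(s) < 1$. Moreover $C \leq 1$, because the theta number of a graph is at most the number of its vertices, and $M \leq C$ since~\eqref{opt:ham-triangle} is feasible: the choice $a_k = \binom{n}{k}/2^n$ satisfies both constraints, as the orthogonality of the Krawtchouk polynomials yields $\sum_k \binom{n}{k} K^n_k(s) = 2^n \delta_{s,0} = 0$ for $s > 0$. I would then propose the candidate primal solution $a_0 = (M-C)/(M-1)$, $a_{k^*} = (C-1)/(M-1)$, and $a_k = 0$ otherwise. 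Both $a_0$ and $a_{k^*}$ are nonnegative (each is a ratio of two nonpositive quantities), the sum equals~$1$ by construction, and the Krawtchouk constraint holds with equality. The resulting objective value is $2^n(M-C)/(M-1)$, which matches the right-hand side of~\eqref{eq:theta-expr}.

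To certify optimality I would exhibit a matching dual feasible solution. The dual, with a free variable $y$ for the equality and $z \geq 0$ for the inequality, minimizes $y + Cz$ subject to $y + z \geq 2^n$ (from $k=0$) and $y + z K^n_k(s) \geq 0$ for $k \geq 1$. Taking $z = 2^n/(1-M)$ and $y = -zM$, the $k=0$ constraint reads $y + z = z(1-M) = 2^n$ and is tight, and for $k \geq 1$ the constraint reads $z(K^n_k(s) - M) \geq 0$, which is valid by definition of $M$ and tight at $k^*$. A direct computation then gives $y + Cz = 2^n(M-C)/(M-1)$, matching the primal objective. By strong LP duality this common value is the optimum and~\eqref{eq:theta-expr} follows. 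The chief bookkeeping hazard is simply keeping signs straight: the strict inequality $M < 1$ (so $M - 1 < 0$) underlies every step and ensures the closed form is well-defined.
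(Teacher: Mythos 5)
Your proof is correct and takes essentially the same route as the paper: reduce to the two-constraint LP for $\vartheta(H(n,s))$, and solve it by putting all mass on $a_0$ and the index $k^*$ minimizing $K^n_k(s)$. Where the paper simply asserts that this two-support solution is best, you supply the dual certificate $z = 2^n/(1-M_K^n(s))$, $y = -z M_K^n(s)$, which makes the optimality argument fully rigorous; this is a welcome tightening of the same idea rather than a different approach.
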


\begin{proof}
Write~$H = H(n, s)$.  By our choice of~$s$, there are $s$-triangles
in~$\Ham^n$, so~$H_0$ is a nonempty graph.
Hence~$\vartheta(H_0) \leq \chi(\overline{H_0}) < |\Ham^n_s|$, and so a
feasible solution of~\eqref{opt:ham-triangle} has to use some
variable~$a_k$ for~$k > 0$.

To solve our problem we want to maximize~$a_0$ keeping the convex
combination
\[
  \sum_{k=0}^n a_k K^n_k(s)
\]
below~$|\Ham^n_s|^{-1} \vartheta(H_0)$.  We cannot achieve this by using
only~$a_0$, so the best way to do it is to let~$k^*$ be such
that~$K^n_{k^*}(s) = M_K^n(s)$ and use only the variables~$a_0$
and~$a_{k^*}$.  This leads us to the system
\begin{align*}
  a_0 + a_{k^*} &= 1,\\
  a_0 + a_{k^*} M_K^n(s) &= |\Ham^n_s|^{-1} \vartheta(H_0),
\end{align*}
whose solution yields exactly~\eqref{eq:theta-expr}.
\end{proof}

To compute~$\vartheta(H_0)$ we again use symmetry.
Let~$A\colon \Ham^n_s \times \Ham^n_s \to \R$ be a matrix.  If~$A$ is
$\iso(\Ham^n_s)$-invariant, then~$A(x, y)$ depends only on~$d(x, y)$, and
so we write~$A(t)$ for the value of~$A(x, y)$ when~$d(x, y) = t$.  The
matrix~$A$ is $\iso(\Ham^n_s)$-invariant and positive semidefinite if and
only if there are numbers~$a_0$, \dots,~$a_s \geq 0$ such that
\[
  A(t) = \sum_{k=0}^s a_k Q^{n,s}_k(t/2)
\]
(note that Hamming distances in~$\Ham^n_s$ are always even),
where~$Q^{n,s}_k$ is the Hahn polynomial of degree~$k$, normalized
so~$Q_k^{n,s}(0) = 1$.  For an integer~$0 \leq t \leq s$, these polynomials
are given by the formula
\[
  Q^{n,s}_k(t) = \sum_{i=0}^k (-1)^i \binom{s}{i}^{-1} \binom{n-s}{i}^{-1}
  \binom{k}{i} \binom{n+1-k}{i} \binom{t}{i}.
\]
If~$E_k(x, y) = Q^{n,s}_k(d(x, y) / 2)$, then~$\langle E_k, E_l\rangle = 0$
whenever~$k \neq l$ (see Delsarte~\cite{Delsarte1978}, in particular
Theorem~5, and Dunkl~\cite{Dunkl1978}).

With this characterization, $\langle J, A\rangle = |\Ham^n_s|^2 a_0$
since~$E_0 = J$.  Rewriting~\eqref{opt:theta-alt}, we see
that~$\vartheta(H_0)$ is the optimal value of the problem
\begin{equation}%
  \label{opt:theta-link}
  \begin{optprob}
    \max&|\Ham^n_s| a_0\\
    &\sum_{k=0}^s a_k = 1,\\
    &\sum_{k=0}^s a_k Q^{n,s}_k(s/2) = 0,\\
    &\text{$a_0$, \dots,~$a_s \geq 0$.}
  \end{optprob}
\end{equation}

Writing $M^n_Q(s) = \min\{\, Q^{n,s}_k(s/2) : \text{$k=0$,
  \dots,~$s$}\,\}$, we have the analogue of Theorem~\ref{thm:theta-expr}.

\begin{theorem}%
\label{thm:theta-link-expr}
If~$n \geq 1$ is an integer and~$0 < s \leq \lfloor 2n/3\rfloor$ is an even
integer, then
\[
  \vartheta(H(n, s)_0) = |\Ham^n_s| \frac{M_Q^n(s)}{M_Q^n(s) - 1}.
\]
\end{theorem}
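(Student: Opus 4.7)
The plan is to mirror the reasoning used for Theorem~\ref{thm:theta-expr} applied to the linear program~\eqref{opt:theta-link}. Since $Q^{n,s}_0 \equiv 1$, the two equality constraints become $\sum_{k=0}^s a_k = 1$ and $a_0 + \sum_{k=1}^s a_k Q^{n,s}_k(s/2) = 0$. The feasible region is a subpolytope of the simplex, so it is bounded and the maximum is attained at a vertex, which has at most two nonzero coordinates because only two linear equalities are imposed beyond nonnegativity.

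I would first argue that at any optimal vertex, the support is exactly $\{0, k^*\}$ for some $k^* \geq 1$ with $Q^{n,s}_{k^*}(s/2) < 0$. The graph $H(n,s)_0$ has a nonempty vertex set $\Ham^n_s$, so $\vartheta(H(n,s)_0) \geq \alpha(H(n,s)_0) \geq 1 > 0$, which forces $a_0 > 0$ at the optimum. Combined with $Q^{n,s}_0(s/2) = 1$, the second equality then forces at least one additional positive $a_{k^*}$ with $Q^{n,s}_{k^*}(s/2) < 0$, and vertex minimality caps the support at exactly two. In particular this shows $M_Q^n(s) < 0$, so the expression in the statement is well defined.

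Next I would solve the resulting $2 \times 2$ system $a_0 + a_{k^*} = 1$ and $a_0 + a_{k^*} Q^{n,s}_{k^*}(s/2) = 0$, obtaining $a_0 = Q^{n,s}_{k^*}(s/2) / (Q^{n,s}_{k^*}(s/2) - 1)$. The map $x \mapsto x/(x-1)$ has derivative $-1/(x-1)^2 < 0$, so it is strictly decreasing on $(-\infty, 0)$; consequently $a_0$ is maximized over $k^* \in \{1, \ldots, s\}$ precisely when $Q^{n,s}_{k^*}(s/2) = M_Q^n(s)$. Multiplying the resulting value by $|\Ham^n_s|$ gives the claimed formula.

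There is no serious obstacle here, since the logic is essentially that of Theorem~\ref{thm:theta-expr}. The only twist is that~\eqref{opt:theta-link} carries an equality where~\eqref{opt:ham-triangle} had an inequality, but this change merely tightens the argument for why the optimal support must include a second index below $Q^{n,s}_0(s/2) = 1$.
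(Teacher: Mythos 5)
Your argument is correct and is essentially the adaptation of the proof of Theorem~\ref{thm:theta-expr} that the paper itself calls for; the extra steps you supply (bounding the support of an optimal vertex by $2$, observing $\vartheta(H(n,s)_0)>0$ forces $a_0>0$, and deducing that the companion index must satisfy $Q^{n,s}_{k^*}(s/2)<0$ so that $M_Q^n(s)<0$) make the ``best way to do it'' heuristic in the model proof rigorous. Nothing is missing, and the computation $a_0 = M_Q^n(s)/(M_Q^n(s)-1)$ matches the stated formula.
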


\begin{proof}
  Adapt the proof of Theorem~\ref{thm:theta-expr}.
\end{proof}

The upshot is that $\vartheta(H(n,s))$ may be expressed entirely in terms
of the parameters
\begin{align}
    \label{eq:MK}
    M_K^n(s) &= \min\{\, K^n_k(s) : k = 0, \dots,n \,\}\quad\text{and}\\
    \label{eq:MQ}
    M^n_Q(s) &= \min\{\, Q^{n,s}_k(s/2) : k=0, \dots,s \,\}.
\end{align}
Very similar expressions can be derived for the theta number in the more
general setting of \defi{$q$-ary cubes} $\{0, \dots, q-1\}^n$ for any
integer~$q \geq 2$; in this case we must use Krawtchouk polynomials with
weight~$(q-1)/q$ (see Dunkl~\cite{Dunkl1976}) and $q$-ary Hahn
polynomials~\cite{Delsarte1978}.

The theta number for hypergraphs can also be extended to some well-behaved
infinite hypergraphs, and can be used in particular to provide upper bounds
for the density of simplex-avoiding sets on the sphere and in Euclidean
space~\cite{CastroOSV2022}.  For triangle-avoiding sets on the
sphere~$S^{n-1}$, for instance, the bound obtained is like the one in
Theorems~\ref{thm:theta-expr} and~\ref{thm:theta-link-expr}, with both the
Krawtchouk and Hahn polynomials replaced by Gegenbauer (ultraspherical)
polynomials $P^n_k$ (resp.~$P^{n-1}_k$), which are the orthogonal
polynomials on the interval $[-1, 1]$ for the weight function
$(1-x^2)^{(n-3)/2}$.  In this setting, the link of a vertex~$x \in S^{n-1}$
is a scaled copy of~$S^{n-2}$.

This bound can be analyzed asymptotically, yielding an upper bound for
the density of simplex-avoiding sets that decays exponentially fast in
the dimension of the underlying space. The key point in the analysis
is to show exponential decay of the parameter
$M_P^n(t) = \min\{\, P^n_k(t) : k \geq 0 \,\}$ for $t \in (0,1)$. This
is done in two steps. First, one uses results on the asymptotic
behavior of the roots of Gegenbauer polynomials to show that
$\min\{\, P^n_k(t) : k \geq 0 \,\}$ is attained at~$k =
\Omega(n)$. Then, one shows that $|P^n_k(t)|$ tends to~$0$
exponentially fast if $k = \Omega(n)$ by exploiting a particular
integral representation for the Gegenbauer
polynomials~\cite[Lemma~4.2]{CastroOSV2022}.

\begin{figure}[t]
  \begin{center}
    \includegraphics{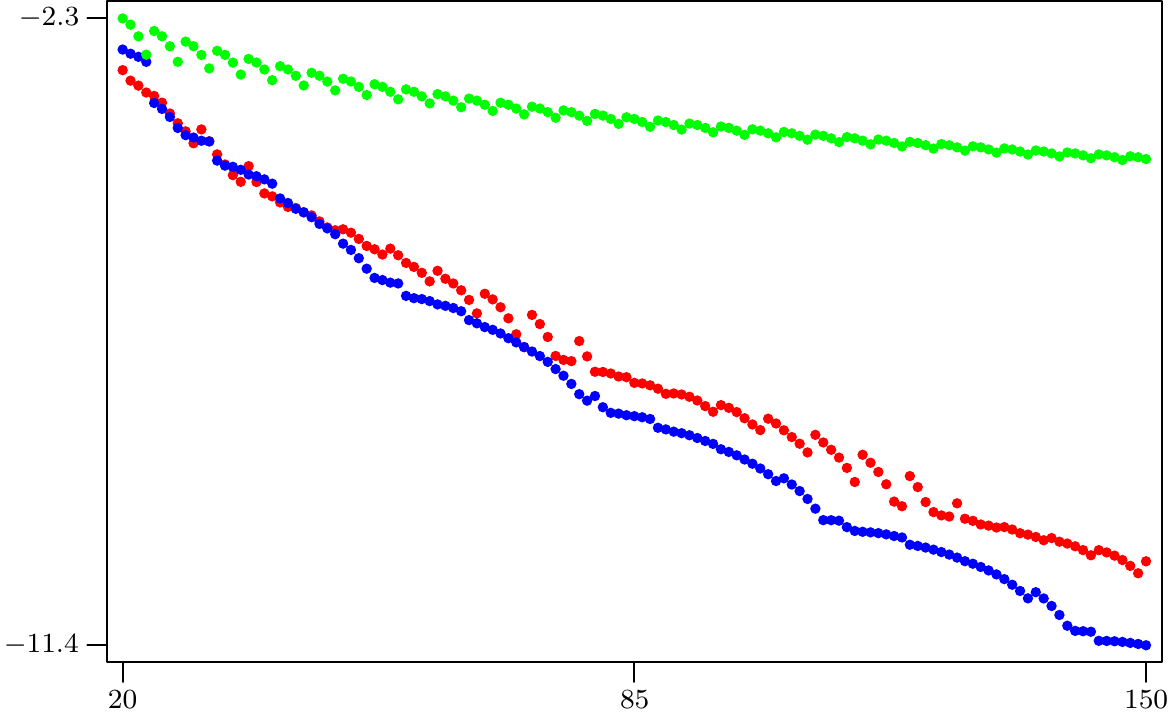}
  \end{center}

  \caption{The plot shows, for every~$n = 20$, \dots,~$150$ on the
    horizontal axis, the value of~$\ln(\vartheta(H(n, s)) / 2^n)$ on the
    vertical axis, where~$s$ is the even integer closest to~$n/2$ (in
    green), $n/3$ (in red), and~$n/4$ (in blue).}%
  \label{fig:decay}
\end{figure}

The same can be attempted for the Hamming cube: how does the density of a
subset of~$\Ham^n$ that avoids $s$-triangles behave as~$n$ goes to
infinity?  For a fixed~$s$, the answer is simple, since~$|\Ham^n_s|$ is
exponentially smaller than~$|\Ham^n|$. We should therefore consider a
regime where $s$ and $n$ both tend to infinity; for instance, we could
take~$s = s(n, c)$ to be the even integer closest to~$n/c$ for
some~$c > 1$.  Numerical evidence (see Figure~\ref{fig:decay}) supports the
following conjecture.

\begin{conjecture}%
  \label{conj:hamming}
  With~$s(n, c)$ defined as above,~$\vartheta(H(n, s(n, c))) / 2^n$
  decays exponentially fast with~$n$ for every fixed~$c > 2$,
  whereas~$\vartheta(H(n, s(n, 2))) / 2^n$ decays linearly fast
  with~$n$.
\end{conjecture}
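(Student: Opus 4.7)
The plan is to combine Theorems~\ref{thm:theta-expr} and~\ref{thm:theta-link-expr} to express the normalized theta number purely in terms of the polynomial minima $M_K^n(s)$ and $M_Q^n(s)$. Substituting the expression for $\vartheta(H(n,s)_0)$ into that for $\vartheta(H(n,s))$, a short algebraic manipulation yields the clean identity
\[
  \frac{\vartheta(H(n,s))}{2^n} = 1 - \frac{1}{(1 - M_K^n(s))(1 - M_Q^n(s))}.
\]
Since both minima are nonpositive in the meaningful regime (the programs~\eqref{opt:ham-triangle} and~\eqref{opt:theta-link} are only feasible in that case), this quantity is bounded above by $|M_K^n(s)| + |M_Q^n(s)| + |M_K^n(s)|\,|M_Q^n(s)|$ and is of the same order as $|M_K^n(s)| + |M_Q^n(s)|$ whenever both are small. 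Hence the conjecture reduces to analyzing the rate at which these Krawtchouk and Hahn minima approach zero when $s = s(n,c)$.

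For the case $c = 2$, a direct expansion using the formula $K_2^n(t) = ((n-2t)^2 - n)/(n(n-1))$ gives
\[
  K_2^n(n/2) = -\frac{1}{n-1},
\]
and an analogous expansion for $Q_2^{n,n/2}(n/4)$ yields a value of order $-1/n$. One would then verify that these are indeed the minima over $k$. For Krawtchouk, the identity $(1-z)^{n/2}(1+z)^{n/2} = (1-z^2)^{n/2}$ yields the closed form $K_{2j}^n(n/2) = (-1)^j \binom{n/2}{j}/\binom{n}{2j}$, which is nonnegative for even $j$ and has magnitude $O(n^{-3})$ or smaller for odd $j \geq 3$; so $K_2^n(n/2)$ attains the minimum. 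A similar case analysis, using a three-term recurrence or generating function for the Hahn family, would handle $M_Q^n(s(n,2))$ and give the claimed $\Theta(1/n)$ decay.

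For $c > 2$, the strategy mirrors that of~\cite[Lemma~4.2]{CastroOSV2022} for Gegenbauer polynomials. For Krawtchouk polynomials one starts from
\[
  \sum_{k=0}^n K_k^n(s) \binom{n}{k} z^k = (1-z)^s (1+z)^{n-s},
\]
extracts coefficients via Cauchy's formula, and estimates the resulting contour integral by saddle-point analysis. The aim is to exhibit an index $k^\ast = k^\ast(n,c)$ for which $|K_{k^\ast}^n(s(n,c))| = O(\rho_1(c)^n)$ with $\rho_1(c) < 1$. An analogous argument, using an integral or hypergeometric representation of the Hahn polynomials, should yield the corresponding exponential bound $|M_Q^n(s(n,c))| = O(\rho_2(c)^n)$ with $\rho_2(c) < 1$.

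The principal obstacle is precisely this asymptotic analysis for $c > 2$: one must simultaneously identify a minimizing index that avoids the oscillation regime, where amplitudes are only polynomially small in $n$, and prove a sharp exponential upper bound at that index. The saddle-point geometry depends on the joint scaling of the three parameters $(n,s,k)$ and is more intricate than in the continuous Gegenbauer setting; this technical estimate, rather than the elementary reduction from the polynomial minima to $\vartheta(H(n,s))/2^n$, is the crux of the argument.
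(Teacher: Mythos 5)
This statement is a \emph{conjecture}, not a theorem: the paper offers numerical evidence (Figure~\ref{fig:decay}) and explicitly leaves the question open, remarking that the obstacle to a proof is the lack of a suitable integral representation for the Krawtchouk and Hahn polynomials. So there is no paper proof to compare against, and to your credit you do not claim to close the conjecture either --- your last paragraph correctly identifies the same gap. What you \emph{have} done is formalize the reduction the paper only gestures at, and it checks out. Combining Theorems~\ref{thm:theta-expr} and~\ref{thm:theta-link-expr} and clearing denominators gives exactly
\[
  \frac{\vartheta(H(n,s))}{2^n} \;=\; 1 - \frac{1}{\bigl(1 - M_K^n(s)\bigr)\bigl(1 - M_Q^n(s)\bigr)},
\]
which is worth recording explicitly, since it is precisely the high-dimensional Hoffman bound $\hoff$ of a $3$-uniform weighted hypergraph; this is consistent with the remark after Theorem~\ref{thm:hoff-theta} that $\vartheta$ and $\hoff$ coincide when the hypergraph and all its links are vertex transitive, as is the case for $H(n,s)$ under $\iso(\Ham^n)$. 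Your bound $a + b + ab$ for this quantity (with $a = -M_K^n(s)$, $b = -M_Q^n(s) \geq 0$) is correct and shows the conjecture is equivalent to the claimed decay of the two polynomial minima.

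Your $c = 2$ treatment of the Krawtchouk side is a genuine small advance on what the paper states. The paper only observes empirically that ``it appears that'' the minimum is attained at $k = 2$; your identity $\sum_k K_k^n(n/2)\binom{n}{k}z^k = (1-z^2)^{n/2}$ gives $K_{2j}^n(n/2) = (-1)^j\binom{n/2}{j}/\binom{n}{2j}$ directly, which is nonnegative for even $j$, has magnitude $O(n^{-3})$ for odd $j \geq 3$, and equals $-1/(n-1)$ at $j = 1$; so the minimum is at $k = 2$, proving the paper's observation. Two caveats: (i) this needs $s = n/2$ exactly, i.e.\ $4 \mid n$ in the normalization $s(n,2)$; (ii) the Hahn side is asserted (``a value of order $-1/n$'', ``a similar case analysis'') but not carried out --- one can verify $Q_2^{n,n/2}(n/4) = -1/(n-2)$ directly from the formula for $Q_k^{n,s}$, but establishing that this is the minimum over all $k$ still requires work, and the Hahn polynomials lack as clean a generating function as the Krawtchouk family.

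The $c > 2$ case remains exactly the open crux in both your writeup and the paper. Your saddle-point sketch for the Krawtchouk generating function is a plausible program, and it mirrors the Gegenbauer analysis of~\cite{CastroOSV2022}, but nothing is proved: you would need to locate a minimizing index $k^\ast(n,c) = \Theta(n)$ outside the oscillatory regime \emph{and} obtain a uniform exponential bound there, and then repeat the feat for the Hahn side where even a serviceable integral representation is missing. Until those estimates are supplied, the statement stays a conjecture. In short: the reduction is correct, the $c=2$ Krawtchouk computation is a clean contribution, the $c=2$ Hahn step is left incomplete, and the $c > 2$ exponential decay is not established.
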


We leave open the question of whether this conjecture, for~$c > 2$,
can be proven using Theorems~\ref{thm:theta-expr}
and~\ref{thm:theta-link-expr}. Following the strategy of Castro-Silva,
Oliveira, Slot, and Vallentin~\cite{CastroOSV2022}, it is possible to
show that the minima in~\eqref{eq:MK} and~\eqref{eq:MQ} are attained
at~$k = \Omega(n)$, using results on the roots of Krawtchouk and Hahn
polynomials.  For~$c = 2$, it appears that the minimum
in~\eqref{eq:MK} is always attained at~$k = 2$ when~$n$ is a multiple
of~4, implying in this case that $M_n(n/2) = K_2^n(n/2) = -1/(n-1)$.
The remaining obstacle to finishing the analysis of the asymptotic
behavior of $M_K^n(s)$ and $M_Q^n(s)$ is the lack of a suitable
integral representation for the Krawtchouk and Hahn polynomials, as
was available for the Gegenbauer polynomials.


%
%

\appendix

\section{Duality for optimization over compact convex sets}%
\label{app:duality}

In the proof of Theorem~\ref{thm:antiblocker} we used a kind of linear
programming dual of the optimization problem over~$\bthb(H)$ to get a
contradiction, but~$\bthb(H)$ is not necessarily a polyhedron.  Since it is
convex and compact, however, a kind of strong duality holds.  The following
theorem should be known, but we could not find a suitable reference.

\begin{theorem}%
\label{thm:convex-dual}%
Let~$I$ be a set and for every~$i \in I$ let~$a_i \in \R^n$ and~$\beta_i
\in \R$; write~$S = \{\, x \in \Rplus^n : \text{$a_i^\tp x \leq \beta_i$
  for~$i \in I$}\,\}$.  If~$S$ is nonempty and compact, then for every~$c
\in \R^n$ we have that~$\max\{\, c^\tp x : x \in S\,\}$ is the optimal
value of 
\begin{equation}%
  \label{eq:convex-dual}%
  \begin{optprob}
    \inf&\sum_{i \in I} y_i \beta_i\\
    &\sum_{i \in I} y_i a_i \geq c,\\
    &\text{$y\colon I \to \Rplus$ has finite support.}
  \end{optprob}
\end{equation}
\end{theorem}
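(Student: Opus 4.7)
The plan is to prove weak duality directly and then deduce strong duality by reducing, via a compactness argument, to a finite-dimensional LP where standard duality applies. Write $p^* := \max\{c^\top x : x \in S\}$, which is attained since $S$ is compact and nonempty, and let $d^*$ denote the infimum in~\eqref{eq:convex-dual}. Weak duality $p^* \le d^*$ is immediate: for any $x \in S$ and dual-feasible $y$,
\[c^\top x \le \Bigl(\sum_{i} y_i a_i\Bigr)^\top x = \sum_{i} y_i (a_i^\top x) \le \sum_{i} y_i \beta_i,\]
using $x \ge 0$ and $\sum_i y_i a_i \ge c$ for the first inequality, and $y \ge 0$ and $a_i^\top x \le \beta_i$ for the second.

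For strong duality, the crucial step is to find a finite $J_0 \subseteq I$ such that $S_{J_0} := \{x \in \mathbb{R}^n_+ : a_j^\top x \le \beta_j \text{ for } j \in J_0\}$ is already bounded. For each finite $J \subseteq I$, the recession cone of $S_J$ equals $C_J := \{d \in \mathbb{R}^n_+ : a_j^\top d \le 0 \text{ for } j \in J\}$, and $S_J$ is bounded iff $C_J = \{0\}$. The recession cone of $S$ coincides with $\bigcap_{J\text{ finite}} C_J$ and equals $\{0\}$ since $S$ is bounded; hence the closed subsets $C_J \cap S^{n-1}$ of the compact unit sphere have empty intersection. By the finite intersection property, a finite union $J_0$ of index sets already satisfies $C_{J_0} \cap S^{n-1} = \emptyset$, hence $C_{J_0} = \{0\}$ and $S_{J_0}$ is bounded.

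To finish, fix $\epsilon > 0$. The compact set $B := \{x \in S_{J_0} : c^\top x \ge p^* + \epsilon\}$ is disjoint from $S$, so each $x \in B$ violates some constraint $a_{i(x)}^\top x \le \beta_{i(x)}$; a finite subcover of $B$ by the open half-spaces $\{x : a_i^\top x > \beta_i\}$ gives a finite $J_1 \subseteq I$ with $S_{J_0 \cup J_1} \cap B = \emptyset$, so $\max\{c^\top x : x \in S_{J_0 \cup J_1}\} \le p^* + \epsilon$. Since $S_{J_0 \cup J_1}$ is a nonempty bounded polytope defined by finitely many inequalities, standard finite LP duality yields $y \ge 0$ supported in $J_0 \cup J_1$ with $\sum_j y_j a_j \ge c$ and $\sum_j y_j \beta_j \le p^* + \epsilon$; extending $y$ by zero produces a feasible solution of~\eqref{eq:convex-dual} with objective at most $p^* + \epsilon$, and letting $\epsilon \to 0$ gives $d^* \le p^*$. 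The main obstacle is the first step of locating a finite $J_0$ that bounds the feasible region: without boundedness the truncated LP may be unbounded and its dual infeasible, breaking the reduction; the finite-intersection argument on the positive part of the unit sphere is the ingredient that makes it go through.
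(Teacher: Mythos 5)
Your proof is correct and follows the same overall skeleton as the paper's --- prove weak duality directly, then reduce strong duality to the finite-dimensional LP case by truncating the constraint system, the key step being to keep enough constraints to make the truncated feasible region bounded --- but the route through the details is genuinely different and somewhat cleaner. The paper first reduces to the case of countable $I$ (by selecting at most one constraint $(a_i, \beta_i)$ near each rational point of $\R^{n+1}$), then argues that some finite truncation $S_{k_0}$ is bounded by normalizing putative recession directions and extracting a convergent subsequence, and finally passes to a limit of the optima $x_k^*$ of the nested truncated LPs while invoking LP duality for each $S_k$. You sidestep the countability reduction entirely: the finite intersection property applied to the closed sets $C_J \cap S^{n-1}$ in the compact unit sphere produces a finite $J_0$ with $C_{J_0} = \{0\}$ and $S_{J_0}$ bounded (your identification of $\mathrm{rec}(S)$ with $\bigcap_J C_J$ is correct because $S$ is nonempty closed convex), and an open-cover compactness argument on $B = \{x \in S_{J_0} : c^\tp x \ge p^* + \epsilon\}$ yields, for each $\epsilon > 0$, a finite $J_1$ so that the LP over $S_{J_0 \cup J_1}$ has optimum at most $p^* + \epsilon$; one application of finite LP duality per $\epsilon$ then closes the gap. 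The two arguments are comparable in length; yours trades the sequential limit of primal optima for an $\epsilon$-approximation of the dual value and avoids the countability step, which is a small but real simplification, at the modest cost of invoking topological (rather than sequential) compactness twice.
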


\begin{proof}
It is easy to show that~$\max \leq \inf$: if~$x \in S$ and if~$y$ is a
feasible solution of~\eqref{eq:convex-dual}, then
\[
  c^\tp x \leq \biggl(\sum_{i \in I} y_i a_i\biggr)^\tp x = \sum_{i \in I}
  y_i a_i^\tp x \leq \sum_{i \in I} y_i \beta_i.
\]

For the reverse inequality, start by observing that we may assume that~$I$
is countable.  Indeed, center on each rational point in~$\R^{n+1}$ balls of
radii~$1/k$ for each integer~$k \geq 1$.  Inside every such ball choose a
point~$(a_i, \beta_i)$, for~$i \in I$, if such a point exists.  This gives
a countable subset of~$I$ defining the same set~$S$.

So say~$I = \{1, 2, \ldots\}$ and for an integer~$k \geq 1$ write
\[
  S_k = \{\, x \in \Rplus^n : \text{$a_i^\tp x \leq \beta_i$ for~$1 \leq i
    \leq k$}\,\}.
\]
We claim that there is~$k_0$ such that~$S_{k_0}$ is compact.  If not, then
for every~$k \geq 1$ there is a nonzero~$z_k \in \Rplus^n$ such
that~$a_i^\tp z_k \leq 0$ for all~$1 \leq i \leq k$.  If we normalize these
points so~$\|z_k\| = 1$ for every~$k$, then the sequence~$(z_k)$ has a
converging subsequence, and we may assume that the sequence itself
converges, say to a point~$z$ with~$\|z\| = 1$.  Note that~$z \geq 0$.
Moreover, for every~$i \geq 1$ we have
\[
  a_i^\tp z = \lim_{k\to\infty} a_i^\tp z_k \leq 0,
\]
and since~$S$ is nonempty it follows that~$S$ is unbounded, a
contradiction.

So for every~$k \geq k_0$ let~$x_k^*$ be an optimal solution
of~$\max\{\, c^\tp x : x \in S_k\,\}$.  Since~$S_{k_0}$ is bounded
and~$x^*_k \in S_{k_0}$ for every~$k \geq k_0$, the sequence~$(x^*_k)$ has
a converging subsequence; assume the sequence itself converges to~$x^*$.
Then~$x^* \geq 0$ and for~$i \geq 1$ we have
\[
  a_i^\tp x^* = \lim_{k \to \infty} a_i^\tp x^*_k \leq \beta_i,
\]
so~$x^* \in S$.  Moreover, since
$c^\tp x_k^* \geq \max\{\, c^\tp x : x \in S\,\}$ for every~$k$, we
conclude that~$x^*$ is an optimal solution
of~$\max\{\, c^\tp x : x \in S\,\}$.

The strong duality theorem of linear programming gives us, for
every~$k \geq k_0$, a function~$y_k\colon I \to \Rplus$, supported
on~$[k]$, such that
\[
  \sum_{i \in I} (y_k)_i a_i \geq c\quad\text{and}\quad
  \sum_{i \in I} (y_k)_i \beta_i = c^\tp x_k^*.
\]
Each~$y_k$ is a feasible solution of~\eqref{eq:convex-dual}, and it follows
that the optimal value of~\eqref{eq:convex-dual} is~$\leq c^\tp x^*$.
\end{proof}

\end{document}